\newcommand{\R}{{\mathbb{R}}}
\newcommand{\E}{{\mathbb{E}}}
\renewcommand{\P}{{\mathbb{P}}} % for the probability measure
\newcommand{\triple}{{\vert\kern-0.25ex\vert\kern-0.25ex\vert}}
\theoremstyle{plain}
\newtheorem{definition}{Definition}[section]
\newtheorem{theorem}[definition]{Theorem}
\newtheorem{lemma}[definition]{Lemma}
\newtheorem{corollary}[definition]{Corollary}
\newtheorem{assumption}[definition]{Assumption}
\theoremstyle{definition}
\newtheorem{remark}[definition]{Remark}
\newtheorem{example}[definition]{Example}
\begin{document}
\title{\bf Equivalence of pth moment stability between stochastic differential delay equations and their numerical methods}

\author
{{\bf  Zhenyu Bao, Jingwen Tang, Yan Shen, Wei Liu\footnote{Corresponding author, Email: weiliu@shnu.edu.cn, lwbvb@hotmail.com}}
\\
Department of Mathematics, \\ Shanghai Normal University, Shanghai, 200234, China
}

\date{}

\maketitle

\begin{abstract}
In this paper, a general theorem on the equivalence of pth moment stability between stochastic differential delay equations (SDDEs) and their numerical methods is proved under the assumptions that the numerical methods are strongly convergent and have the bouneded $p$th moment in the finite time. The truncated Euler-Maruyama (EM) method is studied as an example to illustrate that the theorem indeed covers a large ranges of SDDEs. Alongside the investigation of the truncated EM method, the requirements on the step size of the method are significantly released compared with the work, where the method was initially proposed.

\medskip \noindent
{\small\bf Key words}: stochastic differential delay equations, equivalence of pth moment stability, truncated Euler-Maruyama mehtod, highly non-linear coefficients.
\end{abstract}

\section{introduction}
Higham, Mao and Stuart in \cite{HMS2003} initialised the study on the equivalence of stabilities between solutions of stochastic differential equations (SDEs) and their numerical solutions. To be more precise, it is proved in their paper that underlying solutions are mean square stable if and only if numerical solutions are also stable in the mean square sense. The result applies under the assumption of the finite time convergence of the numerical methods.
\par
Mao in \cite{Mao2007_JCAM} extended such a result to stochastic differential delay equations (SDDEs). Zhao, Song and Liu in \cite{ZSL2009_IJNAM} investigated such equivalence for SDDEs with Poisson jump and Markov switching. More recently, Liu, Li and Deng in \cite{LLD2018_ANM} studied neutral delayed stochastic differential equations for such a problem. Deng et la. in \cite{DFFM2019} extended results in \cite{Mao2007_JCAM} to SDDEs driven by $G$-Brownian motion. All the works mentioned above were devoted to the exponential stability in the mean square sense.
\par
Mao in \cite{Mao2015_SIAM} generalised the results in \cite{HMS2003} to the case of pth moments and made some connections of the almost sure stability between SDEs and their numerical solutions. Yang and Li in \cite{YL2019} discussed similar problems in the $G$-framework.
\par
In this paper, we study the equivalence of pth moment stability between solutions of SDDEs and their numerical solutions, which could be regarded as a generalisation of \cite{Mao2007_JCAM}. In addition, we investigate the truncated Euler-Maruyama (EM) method as an example. Compared with those classical Euler-type methods discussed in previous works mentioned above, we do not need to impose the global Lipschitz condition on either the drift or diffusion coefficient.
\par
The truncated EM method was proposed originally by Mao in \cite{Mao2015,Mao2016_JCAM} for SDEs. After that, plenty of works that employed the truncating technique have been done for SDDEs. For example, the truncated EM method was studied by Guo, Mao and Yue in \cite{GMY2018}, the partially truncated EM method was investigated by Zhang, Song and Liu in \cite{ZSL2018}, and the truncated Milstein method was discussed by Zhang, Yin, Song and Liu in \cite{ZYSL2019}.
\par
Our theorem on the truncated EM for SDDEs in this paper is of interest in two aspects. Firstly, the result of the truncated EM demonstrates Theorem \ref{t1} indeed covers a large class of SDDEs and numerical methods. Secondly, the requirement on the step size of the method is significantly released compared with the existing works, which is a stand-alone interesting result. It should be mentioned that many other interesting numerical methods have been proposed for SDDEs, for example \cite{Akhtari2019,Buc2000,CZK2015,CW2015,HH2018,Huang2014,LXW2019,KS2014,QZ2019,WGW2015,ZH2017} and the references therein.
\par
The main contributions of this paper are summarized as follows.
\begin{itemize}
\item A general theorem on the equivalence of pth moment stability between SDDEs and their numerical methods are stated and proved, which covers a large class of SDDEs and various numerical methods.
\item The constraint on the step size of truncated EM method for SDDEs is released, which makes the method more applicable.
\end{itemize}
\par
This paper is constructed in the following way. The mathematical preliminaries are presented in Section \ref{sec:mathpre}. Section \ref{sec:mainiff} sees the general theorem of the equivalence of the pth moment stability. Section \ref{sec:tEM} is devoted to the study on the truncated EM method. Numerical examples are conducted to demonstrate the theoretical results in Section \ref{sec:numsim}. Section \ref{sec:conl} concludes this paper by emphasizing the main contributions of this work.

\section{Mathematical preliminaries} \label{sec:mathpre}
Throughout this paper we use the following notations. Let $|\cdot|$ be the Euclidean norm in $\R^{n}$ and $\langle x,y \rangle$ be the inner product of vectors $x,y\in \R ^{n}$. If A is a vector or matrix, its transpose is denoted by $A^T$. If A is a matrix, its trace norm is denoted by $|A|=\sqrt{trace(A^TA)}$. If $x$ is a real number, its integer part is denoted by $In[x]$. Let $\R_+=[0,\infty)$ and $\tau >0$. Let $C([-\tau ,0];\R^n)$ denote the family of continuous functions $\varphi$ from $[-\tau ,0]$ to $\R^n$.

Let $(\Omega,\mathscr{F},\{\mathscr{F}_t \}_{t\geq 0},\P)$ be a complete probability space with a filtration $\{\mathscr{F}_t\}_{t\geq 0}$ satisfying the usual conditions(i.e.,it is right continuous and $\mathscr{F}_0$ contains all $\P$-null sets). Let $\omega(t)=(\omega_1(t),\cdots ,\omega_m(t))^T$ be an m-dimensional Brownian motion defined on the probability space. Moreover, for two real numbers a and b, we use $ a\vee b=max(a,b)$ and $a\wedge b=min(a,b)$. If G is a set, its indicator function is denoted by $\mathbbm{1}_G$, namely $\mathbbm{1}_G(x)=1$ if $x\in G$ and $0$ otherwise. Denote by $L^p_{\mathscr{F}_{t}}([-\tau ,0];\R^n)$ the family of $\mathscr{F}_t$-measurable, $C([-\tau ,0];\R^n)$ valued random variables $\xi=\{\xi(u):-\tau\leq u\leq 0\}$ such that
\begin{equation*}
||\xi||^p:=\sup_{-\tau\leq u\leq0}|\xi(u)|^p<\infty.
\end{equation*}
If $y(t)$ is a continuous $\R^n$-valued stochastic process on $t\in [-\tau ,\infty)$, we let $y_t=\{y(t+u):-\tau \leq u\leq 0\}$ for $t\geq 0$ which is regarded as a $C([-\tau ,0];\R^n)$ valued stochastic process.

Let us consider the n-dimensional autonomous stochastic delay differential equations (SDDEs)
\begin{equation}
\label{SDDE}
dy(t)=f(y(t),y(t-\tau))dt+g(y(t),y(t-\tau))d\omega(t),
\end{equation}
with the initial data $\xi\in L^p_{\mathscr{F}_0}([-\tau ,0];\R^n)$, where $f:\R^n \times \R^n\rightarrow \R^n$ and $g:\R^n\times \R^n\rightarrow \R^{n\times m}$.
\par
Denote the numerical solution to the SDDE \eqref{SDDE} by $x(t;0,\xi)$, whose detailed structure is not needed in Sections \ref{sec:mathpre} and \ref{sec:mainiff}. Our definition of the exponential stability in $p$th moment is as follows.
\begin{definition}
The solution to the SDDE (\ref{SDDE}) is said to be exponentially stable in $p$th moment for any $p > 0$, if there is a pair of positive constants $\lambda$ and $M$ such that, for any initial data $\xi\in L^p_{\mathscr{F}_0}([-\tau ,0];\R^n)$
\begin{equation}
\label{wending}
\E|y(t;0,\xi)|^p\leq M||\xi||^pe^{-\lambda t}\quad\forall t\geq0.
\end{equation}
\end{definition}

In this paper we often need to introduce the solution to the SDDE (\ref{SDDE}) for initial data $y_s=\xi\in L^p_{\mathscr{F}_s}([-\tau ,0]; \R^n)$ give at time t=s. As long as the existence and uniqueness of this solution denoted by  $y(t;s,\xi)$ on $t\geq s-\tau$ is guaranteed. It is easy to observe that the solutions to the SDDE (\ref{SDDE}) have the following flow property:
\begin{equation*}
y(t;0,\xi)=y(t;s,y_s)\quad\forall 0\leq s<t<\infty .
\end{equation*}
Moreover, due to the autonomous property of the SDDE (\ref{SDDE}), the exponential stability (\ref{wending}) implies
\begin{equation*}
\E|y(t;s,\xi)|^p\leq M||\xi||^pe^{-\lambda(t-s)}\quad\forall t\geq s.
\end{equation*}

\begin{definition}
A numerical method to the SDDE (\ref{SDDE})is said to be exponentially stable in the $pth$ moment for any $p > 0$ if there is a pair of positive constants $\gamma$ and $H$ such that with initial data $\xi\in L^p_{\mathscr{F}_0}([-\tau ,0]; \R^n)$,
\begin{equation*}
\E|x(t;0,\xi)|^p\leq H||\xi||^pe^{-\gamma t}\quad\forall t\geq0.
\end{equation*}
\end{definition}
\par \noindent
The next two assumptions are needed for Theorem \ref{t1}. Briefly speaking, Assumption \ref{a1} needs that the underlying and numerical solutions have the finite $p$th moment and Assumption \ref{a2} requires that the numerical solution converges to the underlying solution in a finite time with any convergence rate.
\begin{assumption}
\label{a1}
The underlying solution and the numerical solution to SDDE (\ref{SDDE}) satisfy
\begin{equation}
\label{jiexichuzhi}
\sup_{-\tau\leq t\leq \tau}\E|y(t;0,\xi)|^p\leq C^*||\xi||^p,
\end{equation}
and
\begin{equation*}
\sup_{-\tau \leq t \leq \tau}\E|x(t;0,\xi)|^p\leq C^*||\xi||^p,
\end{equation*}
respectively, where $C^*$ is a constant independent of $\xi$.
\end{assumption}

\begin{assumption}
\label{a2}
Write $y(t;0,\xi)=y(t)$ and define $x(t)=x(t;\tau ,y_\tau)$ which is the numerical solution to the SDDE (\ref{SDDE}) with initial data $y_\tau$ starting from $t=\tau$, then
\begin{equation}
\label{qiangshoulian}
\sup_{\tau \leq t\leq \tau +T}\E|x(t)-y(t)|^p\leq C(T)||\xi||^p\alpha(\Delta),
\end{equation}
where $C(T)$ depends on T but not on $\xi$ and $\Delta$ and $\alpha(\Delta)$ is an increasing function with respect to $\Delta$.
\end{assumption}

\section{A general theorem} \label{sec:mainiff}
To prove the main theorem, we present two lemmas firstly.

\begin{lemma}
\label{l1}
Let Assumptions \ref{a1} and \ref{a2} hold. For any $p > 0$, suppose that the SDDE (\ref{SDDE}) is exponentially stable in $pth$ moment, namely
\begin{equation*}
\E|y(t;0,\xi)|^p\leq M||\xi||^pe^{-\lambda t}\quad\forall t\geq 0,
\end{equation*}
for all $\xi\in L^p_{\mathscr{F}_0}([-\tau ,0];\R^n)$. Then there exists a $\Delta_1\geq 0$ such that for every $\Delta\leq\Delta_1$, the numerical solution to  the SDDE (\ref{SDDE}) is exponentially stable in $pth$ moment with rate constant $\gamma$ and growth constant $H$, both of which are independent of $\Delta$. More precisely,
\begin{equation*}
\E|x(t;0,\xi)|^p\leq H||\xi||^pe^{-\gamma t}\quad\forall t\geq 0,
\end{equation*}
with $\gamma=\frac{1}{2}\lambda$ and $H=2^{p+1}MC^*e^{\frac{1}{2}\lambda T}$, where
\begin{equation*}
T=\tau(9+In[4\log(2^pM)/\lambda\tau]).
\end{equation*}
\end{lemma}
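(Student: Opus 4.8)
The plan is to convert the exponential decay of the exact solution into decay of the numerical solution by comparing the two over a single window of length $T$, and then to iterate. The workhorse is the elementary inequality $|a+b|^p\le 2^p(|a|^p+|b|^p)$, valid for every $p>0$; applied to $x(t)=x(t;\tau,y_\tau)$ and $y(t)=y(t;0,\xi)$ it gives, for $t\in[\tau,\tau+T]$,
\begin{equation*}
\E|x(t)|^p\le 2^p\,\E|x(t)-y(t)|^p+2^p\,\E|y(t)|^p\le 2^pC(T)\|\xi\|^p\alpha(\Delta)+2^pM\|\xi\|^pe^{-\lambda t},
\end{equation*}
where the first term is controlled by Assumption \ref{a2} and the second by the assumed stability of the exact solution. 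Since $T=\tau(9+In[4\log(2^pM)/(\lambda\tau)])$ is an integer multiple of $\tau$ exceeding $\tau$, the whole delay segment $x_{\tau+T}$ lives on $[T,\tau+T]\subseteq[\tau,\tau+T]$, so evaluating the display there controls $\sup_{T\le s\le\tau+T}\E|x(s)|^p$ by the worst-case factor $e^{-\lambda T}$.

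First I would fix $T$. The exponent is engineered so that $2^pMe^{-\lambda T}\le\tfrac12 e^{-\lambda T/2}$, i.e. $2^{p+1}M\le e^{\lambda T/2}$; the constant $4$ inside $In[\cdot]$ (rather than the bare $2$ that $\lambda T/2\ge\log(2^{p+1}M)$ would demand) together with the additive $9\tau$ leaves ample slack to absorb $\log 2$, the rounding loss $In[x]>x-1$, and the error term. With $T$ pinned down, $C(T)$ is a fixed number, so I would then choose $\Delta_1$ so small that $2^pC(T)\alpha(\Delta)\le\tfrac12 e^{-\lambda T/2}$ for all $\Delta\le\Delta_1$, which is possible because $\alpha$ is increasing and strong convergence forces $\alpha(\Delta)\to0$. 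These two choices yield the one-window contraction
\begin{equation*}
\sup_{T\le s\le\tau+T}\E|x(s;\tau,y_\tau)|^p\le e^{-\lambda T/2}\,\|\xi\|^p,
\end{equation*}
so that $\gamma=\lambda/2$ emerges as exactly the surviving rate.

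Decay on all of $[0,\infty)$ would then come from iterating this estimate over the successive windows $[\tau+kT,\tau+(k+1)T]$. Using the flow property of the scheme and the time-homogeneity of \eqref{SDDE}, I would renew at the right end of each window from the exact solution's segment there, whose $p$th moment has already been discounted by $e^{-\lambda T/2}$ relative to the previous window; a geometric cascade over $k$ windows then produces $H\|\xi\|^pe^{-\gamma t}$, while the finite-moment Assumption \ref{a1} (the constant $C^*$) both covers the initial interval and supplies the prefactor, giving $H=2^{p+1}MC^*e^{\lambda T/2}=2^{p+1}MC^*e^{\gamma T}$.

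The main obstacle is precisely this iteration. Assumption \ref{a2} only compares the numerical solution \emph{restarted from the exact segment} $y_\tau$ with the exact solution, whereas the genuine trajectory $x(t;0,\xi)=x(t;\tau,x_\tau)$ is restarted from the \emph{numerical} segment $x_\tau$; reconciling the two at every renewal point, and upgrading pointwise $p$th-moment bounds into control of the full segment norm $\|\cdot\|^p=\sup_{-\tau\le u\le0}|\cdot(u)|^p$ needed to legitimately re-apply Assumption \ref{a2}, is the delicate step. I expect to handle it by always comparing against the exact solution on each fresh window, so that the error is reset at each boundary rather than propagated through the unknown numerical flow, and by exploiting that $T$ is an integer multiple of $\tau$, which keeps every segment aligned inside a single window.
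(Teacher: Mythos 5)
Your one-window estimate has the roles of the two solutions reversed relative to what the lemma requires, and this is exactly where the argument breaks. You compare $y(t;0,\xi)$ with the numerical solution $x(t;\tau,y_\tau)$ restarted from the \emph{exact} segment, so what you bound --- in the first window and, after each ``renewal from the exact solution's segment,'' in every later window --- is the family of auxiliary objects $x(t;\tau+kT,y_{\tau+kT})$. None of these is the trajectory $x(t;0,\xi)$ that the lemma is about: that trajectory propagates through the numerical flow from the \emph{numerical} segments $x_{\tau+kT}$, which differ from $y_{\tau+kT}$ by errors your scheme never tracks. You flag this obstacle yourself, but your proposed remedy (``always comparing against the exact solution on each fresh window, so that the error is reset at each boundary'') does not repair it; resetting from exact segments is precisely what disconnects your bounds from the genuine numerical solution, since no accumulation of window-wise bounds on freshly restarted copies says anything about the single trajectory generated by the numerical flow alone.

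The paper resolves this by flipping the roles. Fix the genuine numerical trajectory $x(t)=x(t;0,\xi)$ once and for all, and in each window introduce an auxiliary \emph{exact} solution started from the current \emph{numerical} segment, $y(t)=y(t;\tau,x_\tau)$ in the first window and $y(t;iT+\tau,x_{iT+\tau})$ later. This is legitimate because the stability hypothesis on the SDDE holds for \emph{every} admissible initial segment, hence in particular for the $\mathscr{F}$-measurable numerical segment: $\E|y(t)|^p\le M\E\|x_{iT+\tau}\|^pe^{-\lambda(t-iT-\tau)}$. The finite-time convergence assumption (tacitly applied in its mirrored form, with both solutions sharing the common segment $x_{iT+\tau}$, which is the natural reading of strong convergence) then compares $x$ and this auxiliary $y$ on the window and yields the contraction
\begin{equation*}
\sup_{(i+1)T-\tau\le t\le (i+2)T-\tau}\E|x(t)|^p\le e^{-\frac{1}{2}\lambda T}\sup_{iT-\tau\le t\le iT+\tau}\E|x(t)|^p,
\end{equation*}
a recursion purely in moments of the \emph{one} trajectory $x(t;0,\xi)$, which, by the flow property $x(t)=x(t;iT,x_{iT})$ and $T$ being a multiple of $\tau$, iterates down to the initial window; Assumption \ref{a1} then covers $[-\tau,\tau]$ and supplies the factor $C^*$ in $H$. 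The asymmetry you must exploit is that the exact equation's stability is universal in the initial data, so the exact solution can be re-anchored at numerical segments; the numerical method's behavior from arbitrary segments is the very thing being proved, so it is the one object you may never restart.
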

\begin{proof}
Fix any initial data $\xi$, write $x(t;0,\xi)=x(t)$ and define $y(t)=y(t;\tau,x_\tau)$. The exponential stability in the $pth$ moment if the SDDE (\ref{SDDE}) shows
\begin{equation}
\label{T}
\E|y(t)|^p\leq M||x_\tau||^pe^{-\tau(t-\tau)}\quad \text{for}~\forall t\geq \tau.
\end{equation}
By the definition of T, we observe that
\begin{equation}
\label{T2}
2^pMe^{-\lambda(T-2\tau)}\leq e^{-\frac{3}{4}\lambda T}.
\end{equation}
By the elementary inequality
\begin{equation*}
(a+b)^p\leq (2(a\vee b)^p)\leq 2^p(a^p\vee b^p)\leq 2^p(a^p+b^p)\quad\forall a,b\geq 0,
\end{equation*}
we have
\begin{equation}
\label{3}
\E|x(t)|^p\leq 2^p(\E|x(t)|-\E|y(t)|^p+\E|y(t)|^p).
\end{equation}
By (\ref{qiangshoulian}) and (\ref{T})
\begin{equation*}
\begin{split}
\sup_{T-\tau \leq t\leq 2T-\tau}\E|x(t)|^p&\leq 2^p(C(2T-2\tau)||\xi||^p\alpha(\Delta)+M||x_{\tau}||^p e^{-\lambda(T-2\tau)})\\
&\leq[2^pC(2T-2\tau)\alpha(\Delta)+2^pMe^{-\lambda(T-2\tau)}]\sup_{-\tau\leq t\leq \tau}\E|x(t)|^p,
\end{split}
\end{equation*}
if necessary, let $\Delta_1$ be even smaller $(\Delta\leq \Delta_1)$ so that $$2^pC(2T-2\tau)\alpha(\Delta)+2^pMe^{-\lambda(T-2\tau)}\leq e^{(-\frac{1}{2}\lambda T)}.$$
Then
\begin{equation}
\label{+-}
\sup_{T-\tau\leq t \leq 2T-\tau}\E|x(t)|^p \leq e^{-\frac{1}{2}\lambda T}\sup_{-\tau \leq t \leq \tau}\E|x(t)|^p.
\end{equation}
Recall that T is a multiple of $\tau$ and hence of $\Delta$. So, by the flow property, for any integer $i\geq 0$
$$x(t)=x(t;iT,x_{iT})\quad\forall t\geq iT.$$
Repeating the argument above for $x(t;iT,x_{iT})$ in the same way that (\ref{+-}) was obtained we may establish
\begin{equation*}
\sup_{(i+1)T-\tau\leq t\leq (i+2)T-\tau}\E|x(t)|^p\leq e^{-\frac{1}{2}\lambda T}\sup_{iT-\tau\leq t\leq iT+\tau}\E|x(t)|^p.
\end{equation*}
From this we see that
\begin{equation*}
\sup_{(i+1)T-\tau\leq t\leq (i+2)T-\tau}\E|x(t)|^p\leq e^{-\frac{1}{2}\lambda T}\sup_{iT-\tau\leq t\leq (i+1)T-\tau}\E|x(t)|^p.
\end{equation*}
By iteration, we can get that
\begin{equation}
\label{guina}
\sup_{(i+1)T-\tau\leq t\leq (i+2)T-\tau}\E|x(t)|^p\leq e^{-\frac{1}{2}\lambda(i+1)T}\sup_{-\tau\leq t\leq T-\tau}\E|x(t)|^p.
\end{equation}
Now, by (\ref{qiangshoulian}) and (\ref{T}), we see from (\ref{3}) that
\begin{equation*}
\begin{split}
\sup_{\tau\leq t\leq T-\tau}\E|x(t)|^p&\leq 2^p(\E|x(t)-y(t)|^p+\E|y(t)|^p)\\
&\leq2^p(C(T-2\tau)||\xi||^P\alpha(\Delta)+M||x_{\tau}||^p)\\
&\leq(2^pC(T-2\tau)\alpha(\Delta)+2^pM))\sup_{-\tau\leq t\leq\tau}\E|x(t)|^p.
\end{split}
\end{equation*}
Choose $\Delta_2(\Delta\leq\Delta_2)$ then
\begin{equation*}
2^pC(T-2\tau)\alpha(\Delta)+2^pM\leq 2^{p+1}M.
\end{equation*}
Then
\begin{equation*}
\sup_{\tau\leq t\leq T-\tau}\E|x(t)|^p\leq 2^{p+1}M\sup_{-\tau\leq t\leq\tau}\E|x(t)|^p.
\end{equation*}
Substituting this into (\ref{guina}) and bearing in mind that M must not be less than 1 we obtain that
\begin{equation*}
\begin{split}
\sup_{(i+1)T-\tau\leq t\leq (i+2)T-\tau}\E|x(t)|^p &\leq 2^{p+1}Me^{-\frac{1}{2}\lambda (i+1)T}\sup_{-\tau\leq t\leq\tau}\E|x(t)|^p \\
&\leq 2^{p+1}MC^*||\xi||^pe^{-\frac{1}{2}\lambda (i+1)T}\quad\forall i\geq 0,
\end{split}
\end{equation*}
while
\begin{equation*}
\sup_{0\leq t\leq T-\tau}\E|x(t)|^p\leq 2^{p+1}MC^*||\xi||^p.
\end{equation*}
Hence
\begin{equation*}
\E|x(t)|^p\leq 2^{p+1}MC^*e^{\frac{1}{2}\lambda T}||\xi||^p e^{-\frac{1}{2}\lambda t}\quad\forall t\geq 0.
\end{equation*}
That is, the numerical method is exponentially stable in the pth moment with $\gamma=\frac{1}{2}\lambda$ and $H=2^{p+1}MC^*e^{\frac{1}{2}\lambda T}$. This completes the proof.
\end{proof}
Lemma \ref{l1} shows that the expenential stable in the pth moment of the SDDE(\ref{SDDE}) implies the exponential stability in the pth moment of the numerical method for small $\Delta$. Let us now eatablish the converse theorem.

\begin{lemma}
\label{l2}
Let Assumption \ref{a2} hold. Assume that for some $\Delta\geq 0$, the numerical method is exponentially stable in the pth moment on the SDDE (\ref{SDDE}), namely
\begin{equation*}
\E|x(t;0,\xi)|^p\leq H||\xi||^pe^{-\gamma t}\quad\forall t\geq 0,
\end{equation*}
for all $\xi\in L^p_{\mathscr{F}_0}([-\tau ,0];\R^n)$. Suppose we can verify
\begin{equation}
\label{4}
2^pC(2T-2\tau)\alpha(\Delta)+2^pHe^{-\gamma(T-2\tau)}\leq e^{(-\frac{1}{2}\lambda T)},
\end{equation}
where $T=\tau(9+In[4\log(2^pH)/\gamma\tau])$. Then the SDDE is exponentially stable in the pth moment. More precisely,
\begin{equation*}
\E|y(t;0,\xi)|^p\leq M\E||\xi||^pe^{-\lambda t}\quad\forall t\geq 0,
\end{equation*}
with
\begin{equation*}
\lambda=\frac{1}{2}\gamma~~and~~M=2^{p+1}HC^*e^{\frac{1}{2}\gamma T}.
\end{equation*}
\end{lemma}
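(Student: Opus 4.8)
The plan is to run the proof of Lemma \ref{l1} in reverse, interchanging the roles of the true solution $y$ and the numerical solution $x$ and, correspondingly, the constant pairs $(M,\lambda)$ and $(H,\gamma)$. The observation that legitimizes this symmetry is that Assumption \ref{a2} controls $\E|x(t)-y(t)|^p=\E|y(t)-x(t)|^p$, so the strong-convergence estimate is available whichever of the two processes we wish to bound; here we bound $y$ in terms of $x$ rather than the reverse. The one structural difference from Lemma \ref{l1} is that $\Delta$ is now fixed rather than chosen small, so every place where Lemma \ref{l1} shrank $\Delta$ is replaced by invoking the hypothesis (\ref{4}).

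First I would fix $\xi$, write $y(t;0,\xi)=y(t)$ and set $x(t)=x(t;\tau,y_\tau)$, which is exactly the configuration of Assumption \ref{a2}. Using the assumed exponential stability of the numerical method, together with its time-homogeneity, the scheme started at $t=\tau$ from $y_\tau$ satisfies $\E|x(t)|^p\leq H\|y_\tau\|^p e^{-\gamma(t-\tau)}$ for $t\geq\tau$. Splitting via $(a+b)^p\leq 2^p(a^p+b^p)$ in the form
\begin{equation*}
\E|y(t)|^p\leq 2^p\bigl(\E|y(t)-x(t)|^p+\E|x(t)|^p\bigr),
\end{equation*}
bounding the first term by (\ref{qiangshoulian}) and the second by the stability estimate, I obtain on $[T-\tau,2T-\tau]$
\begin{equation*}
\sup_{T-\tau\leq t\leq 2T-\tau}\E|y(t)|^p\leq\bigl[2^pC(2T-2\tau)\alpha(\Delta)+2^pHe^{-\gamma(T-2\tau)}\bigr]\sup_{-\tau\leq t\leq\tau}\E|y(t)|^p.
\end{equation*}
Hypothesis (\ref{4}) is precisely the statement that the bracketed factor is a genuine contraction, giving the one-period decay $\sup_{T-\tau\leq t\leq 2T-\tau}\E|y(t)|^p\leq e^{-\frac12\gamma T}\sup_{-\tau\leq t\leq\tau}\E|y(t)|^p$.

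Next I would iterate, now using the flow property of the SDDE, $y(t)=y(t;iT,y_{iT})$ for $t\geq iT$ with $T$ a multiple of $\tau$. Repeating the one-period argument on each block and telescoping yields
\begin{equation*}
\sup_{(i+1)T-\tau\leq t\leq(i+2)T-\tau}\E|y(t)|^p\leq e^{-\frac12\gamma(i+1)T}\sup_{-\tau\leq t\leq T-\tau}\E|y(t)|^p.
\end{equation*}
To close the induction I must bound the seed $\sup_{-\tau\leq t\leq T-\tau}\E|y(t)|^p$ by $\|\xi\|^p$: on $[-\tau,\tau]$ this is Assumption \ref{a1}, while on $[\tau,T-\tau]$ the same splitting with the cruder factor $2^pC(T-2\tau)\alpha(\Delta)+2^pH$ applies, and since (\ref{4}) forces $2^pC(T-2\tau)\alpha(\Delta)<1\leq 2^pH$ (using $H\geq 1$), this factor is at most $2^{p+1}H$. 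Hence the seed is at most $2^{p+1}HC^*\|\xi\|^p$, and converting the per-block geometric decay into a continuous-time rate produces $\E|y(t)|^p\leq 2^{p+1}HC^*e^{\frac12\gamma T}\|\xi\|^p e^{-\frac12\gamma t}$, i.e.\ $\lambda=\frac12\gamma$ and $M=2^{p+1}HC^*e^{\frac12\gamma T}$.

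The step I expect to require the most care is the decay bound $\E|x(t)|^p\leq H\|y_\tau\|^p e^{-\gamma(t-\tau)}$ for the numerical solution started at $t=\tau$. In Lemma \ref{l1} the corresponding fact for the SDDE followed transparently from its autonomous character and flow property, as already recorded in Section \ref{sec:mathpre}; here one must know that the scheme is itself time-homogeneous, so that the constants $H$ and $\gamma$ transfer unchanged to the shifted starting time. A secondary subtlety, exactly as in Lemma \ref{l1}, is the informal interchange between $\|y_\tau\|^p$ and $\sup_{-\tau\leq t\leq\tau}\E|y(t)|^p$, which is to be read in the same averaged sense used throughout. Granting these, no genuinely new estimate is needed: the argument is the mirror image of Lemma \ref{l1}.
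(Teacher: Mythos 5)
Your proposal is correct and takes essentially the same route as the paper's own proof: the same role swap between $y(t)$ and $x(t)=x(t;\tau,y_\tau)$, the same splitting $\E|y(t)|^p\leq 2^p\bigl(\E|y(t)-x(t)|^p+\E|x(t)|^p\bigr)$ on $[T-\tau,2T-\tau]$ with hypothesis (\ref{4}) supplying the contraction, the same block iteration via the flow property, and the same seed bound leading to $M=2^{p+1}HC^*e^{\frac{1}{2}\gamma T}$. You even make explicit two points the paper leaves implicit (that Assumption \ref{a1} is needed for the seed despite not appearing in the lemma's hypotheses, and that (\ref{4}) together with $H\geq 1$ justifies replacing $2^pC(T-2\tau)\alpha(\Delta)+2^pH$ by $2^{p+1}H$), which only strengthens the argument.
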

The proof of this lemma is proved in the same way as lemma \ref{l1} was proved, so we only give the outline but highlight the different part.

\begin{proof}
Fix any initial data $\xi$, write $y(t;0,\xi)=y(t)$ and define $x(t)=x(t;\tau,y_\tau)$. The exponential stability in the pth moment of the numerical method shows
\begin{equation}
\label{wending2}
\E|x(t)|^p\leq H||y_\tau||^pe^{-\gamma(t-\tau)}\quad\text{for}~\forall T\geq\tau.
\end{equation}
By (\ref{qiangshoulian}) and (\ref{wending2}) we can show that
\begin{equation*}
\begin{split}
\E|y(t)|^p&\leq 2^p(\E|x(t)-y(t)|^p+\E|x(t)|^p)\\
\sup_{T-\tau \leq t\leq 2T-\tau}\E|y(t)|^p&\leq 2^p(C(2T-2\tau)||\xi||^p\alpha(\Delta)+H||y_{\tau}||^pe^{-\gamma(T-2\tau)}).
\end{split}
\end{equation*}
Using (\ref{4}) we get that
\begin{equation*}
\sup_{T-2\tau\leq t\leq 2T-\tau}\E|y(t)|^p\leq e^{-\frac{1}{2}\gamma T}\sup_{-\tau\leq t\leq \tau}\E|y(t)|^p.
\end{equation*}
Repeating thie argument we find that
\begin{equation}
\label{guinahou}
\sup_{(i+1)T-\tau\leq t\leq (i+2)T-\tau}\E|y(t)|^p \leq e^{-\frac{1}{2}\gamma (i+1)T}\sup_{-\tau\leq t\leq T-\tau}\E|y(t)|^p.
\end{equation}
On the other hand,by (\ref{qiangshoulian}) and (\ref{wending2}), we can show that
\begin{equation*}
\sup_{\tau\leq t\leq T-\tau}\E|y(t)|^p\leq (2^pC(T-2\tau)\alpha(\Delta)+2^pH\sup_{-\tau\leq t\leq \tau}\E|y(t)|^p.
\end{equation*}
This together with (\ref{jiexichuzhi}), yields
\begin{equation}
\label{guinahou2}
\sup_{-\tau\leq t\leq T-\tau}\E|y(t)|^p\leq C^*(2^pC(T-2\tau)\alpha(\Delta)+2^pH)||\xi||^p.
\end{equation}
It then follows from (\ref{guinahou}) and (\ref{guinahou2}) that
\begin{equation*}
\E|y(t)|^p\leq C^*2^{p+1}He^{\frac{1}{2}\gamma T}e^{-\frac{1}{2}\gamma t}\quad\forall t\geq 0,
\end{equation*}
as required.
\end{proof}

Now, we are ready for the main theorem.

\begin{theorem}
\label{t1}
Under Assumption \ref{a1} and Assumption \ref{a2}, the SDDE (\ref{SDDE}) is exponentially stable in pth moment if and only if for some $\Delta>0$, the numerical method is exponentially stable in pth moment.
\end{theorem}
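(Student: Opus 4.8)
The plan is to read Theorem \ref{t1} as nothing more than the conjunction of the two preparatory lemmas, since each already supplies one implication of the asserted equivalence. Accordingly I would split the argument into its two directions and perform essentially no fresh computation, instead quoting Lemmas \ref{l1} and \ref{l2} after confirming that their hypotheses are in force. Both Assumptions \ref{a1} and \ref{a2} are carried in through those lemmas, so no additional standing hypothesis is introduced here.

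For the direction ``SDDE stable $\Rightarrow$ numerical method stable for some $\Delta$'' I would argue directly from Lemma \ref{l1}. Assuming the SDDE is exponentially stable in the $p$th moment with constants $M,\lambda$, Lemma \ref{l1} produces a threshold $\Delta_1\geq 0$ such that every $\Delta\leq\Delta_1$ renders the scheme exponentially stable in the $p$th moment with the explicit rate $\gamma=\lambda/2$ and growth constant $H=2^{p+1}MC^*e^{\lambda T/2}$. In particular, fixing any $\Delta\leq\Delta_1$ exhibits a step size for which the method is stable, which is precisely the ``for some $\Delta$'' claim, so this half is immediate.

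For the converse, ``numerical method stable for some $\Delta$ $\Rightarrow$ SDDE stable'', I would invoke Lemma \ref{l2}. Given that the method is exponentially stable with constants $H,\gamma$ for the step size at hand, Lemma \ref{l2} returns SDDE stability with $\lambda=\gamma/2$ and $M=2^{p+1}HC^*e^{\gamma T/2}$, \emph{provided} the technical inequality \eqref{4} holds. Hence the sole remaining task is to secure \eqref{4}, which I would do by the same two-step balancing used inside Lemma \ref{l1}: first fix $T=\tau(9+In[4\log(2^pH)/\gamma\tau])$ so that the decaying term $2^pHe^{-\gamma(T-2\tau)}$ is dominated by a fraction of $e^{-\lambda T/2}$, and then control the residual term $2^pC(2T-2\tau)\alpha(\Delta)$ through the strong-convergence Assumption \ref{a2}, using that $\alpha(\Delta)$ can be taken small.

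I expect the genuine obstacle to lie exactly in verifying \eqref{4}. In Lemma \ref{l1} the analogous inequality was arranged by shrinking $\Delta_1$ freely, because SDDE stability imposes no constraint on the step size; in the present direction, however, $\Delta$ is pinned down by the stability hypothesis while $T$ itself depends on the stability constants $H$ and $\gamma$. The point to make rigorous is therefore that the convergence factor $\alpha(\Delta)$ is small enough relative to the window length $C(2T-2\tau)$, so that the coupling between the smallness of $\alpha(\Delta)$ and the $(H,\gamma)$-dependence of $T$ does not become circular. Once \eqref{4} is in hand, both implications follow at once and the equivalence is established.
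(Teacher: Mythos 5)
Your decomposition is exactly the paper's: the paper's entire proof of Theorem \ref{t1} is the single remark that Lemmas \ref{l1} and \ref{l2} are combined, and your treatment of the forward direction via Lemma \ref{l1} matches it verbatim. The problem is your plan for the converse. You propose to ``secure'' condition \eqref{4} by the same balancing used inside Lemma \ref{l1}, i.e.\ by making $2^pC(2T-2\tau)\alpha(\Delta)$ small through Assumption \ref{a2}. This cannot work: in the converse direction the step size $\Delta$ is the one fixed by the stability hypothesis, $T=\tau(9+In[4\log(2^pH)/\gamma\tau])$ is determined by the given constants $H$ and $\gamma$, and $\alpha$ is an \emph{increasing} function of $\Delta$, so every quantity appearing in \eqref{4} is already pinned down and there is no parameter left to tune. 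Inequality \eqref{4} either holds for those particular numbers or it fails; it is not derivable from Assumptions \ref{a1} and \ref{a2}, and nothing prevents a method from being stable for some large $\Delta$ for which \eqref{4} is false.

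The paper does not close this gap either: it sidesteps it by making \eqref{4} a \emph{hypothesis} of Lemma \ref{l2} (``Suppose we can verify \eqref{4}''), so that, read strictly, Theorem \ref{t1} carries \eqref{4} as an implicit side condition on the ``only if'' direction. This is consistent with the earlier works of Higham--Mao--Stuart and Mao that the paper generalizes, where the analogous condition is meant to be checked numerically for the step size actually used (all of $H$, $\gamma$, $T$, $C(2T-2\tau)$, $\alpha(\Delta)$ are computable), not proved from the standing assumptions. So your instinct that \eqref{4} is the genuine obstacle, and your warning about circularity, are both correct; but the correct resolution is to state the converse conditionally on \eqref{4}, exactly as Lemma \ref{l2} does, rather than to attempt a proof of \eqref{4}. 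As written, your argument for the converse direction has a gap at precisely that step.
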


Combining lemmas \ref{l1} and \ref{l2} we obtain the following sufficient and necessary theorem.

\begin{remark}
The reason that we regard Theorem \ref{t1} as a general result is due to the assumptions we made, where only the finite time convergence and the moment boundedness of the numerical method in a very short time are needed, but no particular structure of the method is specified.
\end{remark}

\section{The truncated EM method} \label{sec:tEM}
This section is to show that the truncated EM method is a numerical approximation whose $p$th moment exponential stability is equivalent to that of the underlying SDDEs. To achieve this, we show the finite time strong convergence as well as the moment boundedness of the method firstly. Then the application of Theorem \ref{t1} implies the desired result.

\subsection{Brief introduction}
To make this paper self contained, we brief the truncated EM method for SDDEs in this part, along which we also discuss the fact that the requirement on the step size is weaken in this paper.
\par

For the SDDE \ref{SDDE}, we impose following assumptions on the drift and diffusion coefficients.
\begin{assumption}
\label{assumption1}
Assume that the coefficients f and g satisfy the local Lipschitz condition: For any $R>0$, there is a $K_R>0$ such that
\begin{equation*}
\begin{split}
&|f(x,y)-f(\bar{x},\bar{y})|\vee|g(x,y)-g(\bar{x},\bar{y})|\leq K_R(|x-\bar{x}|+|y-\bar{y}|)\\
&|x|\vee|y|\vee|\bar{x}|\vee|\bar{y}|\leq R,
\end{split}
\end{equation*}
for all $x,y,\bar{x},\bar{y}\in\R^n.$
\end{assumption}
\begin{assumption}
\label{assumption2}
Assume that the coefficients satisfy the Khasminskii-type condition: There is a pair of constants $p>2$ and $K_1>0$ such that
\begin{equation*}
x^Tf(x,y)+\frac{p-1}{2}|g(x,y)|^2\leq K_1(1+|x|^2+|y|^2),
\end{equation*}
for all $x,y\in\R^n.$
\end{assumption}
\begin{assumption}
\label{assumption3}
We need an additional condition. To state it, we need a new notation. Let $\mathcal{U}$ denote the family of continuous functions $U:\R^n\times\R^n\rightarrow\R_+$ such that for each $b>0,$ there is a positive constant $K_b$ for which
$$U(x,\bar{x})\leq K_b|x-\bar{x}|^2\quad\forall x,\bar{x}\in\R^n,~with~|x|\vee|\bar{x}|\leq b.$$
Assume that there is a pair of constants $q>2$ and $H_1>0$ such that
\begin{equation*}
\begin{split}
&(x-\bar{x})^T(f(x,y)-f(\bar{x},\bar{y}))+\frac{q-1}{2}|g(x,y)-g(\bar{x},\bar{y})|^2\\
&\leq H_1(|x-\bar{x}|^2+|y-\bar{y}|^2)-U(x,\bar{x})+U(y,\bar{y}),
\end{split}
\end{equation*}
for all $x,y,\bar{x},\bar{y}\in\R^n.$
\end{assumption}
\begin{assumption}
\label{assumption4}
Assume that there is a pair of positive constants $\rho$ and $H_2$ such that
\begin{equation*}
\begin{split}
&|f(x,y)-f(\bar{x},\bar{y})|^2\vee|g(x,y)-g(\bar{x},\bar{y})|^2\\
&\leq H_2(1+|x|^\rho+|y|^\rho+|\bar{x}|^\rho+|\bar{y}|^\rho)(|x-\bar{x}|^2+|y-\bar{y}|^2),
\end{split}
\end{equation*}
for all $x,y,\bar{x},\bar{y}\in\R^n.$
\end{assumption}
\begin{assumption}
\label{assumption5}
There is a pair of constants $K_2>0$ and $\gamma\in(0,1]$ such that the initial data $\xi$ satisfies
\begin{equation*}
|\xi(u)-\xi(v)|\leq K_2|u-v|^\gamma, \quad -\tau\leq v< u\leq 0.
\end{equation*}
\end{assumption}
To define the truncated EM numerical solutions, we first choose a strictly increasing continuous function $\mu :\R_+\rightarrow\R_+$ such that $\mu(r)\rightarrow\infty$ as $r\rightarrow\infty$ and
\begin{equation*}
\sup_{|x|\vee|y|\leq r}(|f(x,y)|\vee|g(x,y)|)\leq \mu(r),\quad \forall r\geq1.
\end{equation*}
Denoted by $\mu^{-1}$ is the inverse function of $\mu$ and we see that $\mu^{-1}$ is a strictly increasing continuous function from $[\mu(1),\infty)$ to $\R_+$. We also choose a constant $\hat{h}\geq1\vee\mu(1)$ and a strictly decreasing function $h:(0,1]\rightarrow[\mu(1),\infty)$ such that
\begin{equation*}
\lim_{\Delta\rightarrow0}h(\Delta)=\infty~~and~~\Delta^{1/4}h(\Delta)\leq\hat{h},\quad \forall\Delta\in(0,1].
\end{equation*}
We will later that Assumption \ref{assumption4} implies (\ref{muquzhiyiju}), namely that both coefficients f and g grow at most polynomially, hence we can let $\mu(u)=H_3u^{1+0.5\rho}$, where $H_3$ is a positive constant specified in (\ref{muquzhiyiju}). Moreover, we can let $h(\Delta)=\hat{h}\Delta^{-\varepsilon}$ for some $\varepsilon\in(0,1/4]$. In other words, there are lots of choices for $\mu(\cdot)$ and $h(\cdot)$.
\par
A comparison of assumptions of the method in this paper and those in the previous work is presented in Appendix \ref{sec:appen}.

For a given step size $\Delta\in(0,1]$, let us define a mapping $\pi_{\Delta}$ from $\R^n$ to the closed ball $x\in\R^n$ by
$$\pi_{\Delta}(x)=(|x|\wedge\mu^{-1}(h(\Delta)))\frac{x}{|x|},$$
where we set $x/|x|=0$ when $x=0$. That is, $\pi_{\Delta}$ will map $x$ to itself when $|x|\leq\mu^{-1}(h(\Delta))$ and to $\mu^{-1}(h(\Delta))/|x|$ when $|x|>\mu^{-1}(h(\Delta))$. We then define the truncated functions
\begin{equation*}
f_{\Delta}(x,y)=f(\pi_{\Delta}(x),\pi_{\Delta}(y))~~and~~g_{\Delta}(x,y)=g(\pi_{\Delta}(x),\pi_{\Delta}(y)),
\end{equation*}
for $x,y\in\R^n$. It is easy to see that
\begin{equation}
\label{fgbeikongzhi}
|f_{\Delta}(x,y)|\vee|g_{\Delta}(x,y)|\leq\mu(\mu^{-1}(h(\Delta)))=h(\Delta),\quad\forall x,y\in\R^n.
\end{equation}
\par
From now on, we will let the step size $\Delta$ be a fraction of $\tau.$ That is, we will use $\Delta=\tau/M$ for some positive integer M. When we use the terms of a sufficiently small $\Delta,$ we mean that we choose M sufficiently large.
\par
Let us now form the discrete-time truncated EM solutions. Define $t_k=k\Delta$ for $k=-M,-(M-1),\cdots,0,1,2,\cdots.$ Set $X_k=\xi(t_k)$ for $k=-M,-(M-1),\cdots,0$ and then form
\begin{equation*}
X_{k+1}=X_k+f_{\Delta}(X_k,X_{k-M})\Delta+g_{\Delta}(X_k,X_{k-M})\Delta W_k,
\end{equation*}
for $k=0,1,2,\cdots,$ where $\Delta W_k=W(t_{k+1})-W(t_k).$ In our analysis, it is more convenient to work on the continuous-time step process $\bar{x}(t)$ on $t\in[-\tau,\infty)$ defined by
\begin{equation*}
\bar{x}(t)=\sum_{k=-M}^{\infty}X_k\mathbbm{1}_{[k\Delta,(k+1)\Delta)}(t),
\end{equation*}
where $\mathbbm{1}_{[k\Delta,(k+1)\Delta)}(t)$ is the indicator function of $[k\Delta,(k+1)\Delta)$(please recall the notation defined in the beginning of this paper). The other one is the continuous-time continuous process $x(t)$ on $t\in[-\tau,\infty)$ defined by $x(t)=\xi(t)$ for $t\in[-\tau,0]$ while for $t\geq 0$
\begin{equation*}
x(t)=\xi(0)+\int^{t}_{0}f_{\Delta}(\bar{x}(s),\bar{x}(s-\tau))ds+\int_{0}^{t}g_{\Delta}(\bar{x}(s),\bar{x}(s-\tau))dW(s).
\end{equation*}
We see that $x(t)$ is an It\^o process on $t\geq 0$ with its It\^o differential
\begin{equation}
\label{shuzhiweifen}
dx(t)=f_{\Delta}(\bar{x}(s),\bar{x}(s-\tau))dt+g_{\Delta}(\bar{x}(s),\bar{x}(s-\tau))dW(t).
\end{equation}
It is useful to know that $X_k=\bar{x}(t_k)=x(t_k)$ for every $k\geq-M$, namely they coincide at $t_k.$ Of course, $\bar{x}(t)$ is computable but $x(t)$ is not in general.

\subsection{Results}

\begin{lemma}
\label{lemma1}
Let Assumption \ref{assumption2} hold. Then, for all $\Delta\in(0,1]$, we have
\begin{equation}
\label{lemma1gongshi}
x^Tf_{\Delta}(x,y)+\frac{p-1}{2}|g_{\Delta}(x,y)|^2\leq\hat{k}(1+|x|^2+|y|^2)\quad \forall x,y\in\R^n,
\end{equation}
where $\hat{k}=2K_1(1\vee\frac{1}{\mu^{-1}(h(1))})$.
\end{lemma}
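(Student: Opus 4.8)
The plan is to reduce the inequality to the original Khasminskii bound (Assumption \ref{assumption2}) evaluated at the truncated argument $(\pi_{\Delta}(x),\pi_{\Delta}(y))$, and then to transfer that estimate back to the untruncated test vector $x$. Writing $\bar x:=\pi_{\Delta}(x)$ and $\bar y:=\pi_{\Delta}(y)$, the two facts about the projection I would use throughout are purely geometric: $\bar x$ is a nonnegative scalar multiple of $x$ with the same direction and $|\bar x|=|x|\wedge\mu^{-1}(h(\Delta))\le|x|$ (and likewise for $\bar y$); and, since $h$ is decreasing while $\mu^{-1}$ is increasing, for every $\Delta\in(0,1]$ one has $\mu^{-1}(h(\Delta))\ge\mu^{-1}(h(1))=:\beta\ge1$, the last inequality because $h(1)\ge\mu(1)$. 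These are exactly what produce the constant $\hat{k}=2K_1(1\vee\beta^{-1})$.

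I would then split into two cases according to whether the truncation is active at $x$. If $|x|\le\mu^{-1}(h(\Delta))$, then $\bar x=x$, so the left-hand side equals $x^Tf(x,\bar y)+\frac{p-1}{2}|g(x,\bar y)|^2$, and Assumption \ref{assumption2} bounds this by $K_1(1+|x|^2+|\bar y|^2)\le K_1(1+|x|^2+|y|^2)$ via $|\bar y|\le|y|$; since $\hat{k}\ge 2K_1\ge K_1$ this case is immediate. The substance lies in the case $|x|>\mu^{-1}(h(\Delta))$, where $\bar x=\theta^{-1}x$ with $\theta:=|x|/|\bar x|>1$, so that $x^Tf_{\Delta}(x,y)=\theta\,\bar x^Tf(\bar x,\bar y)$.

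In that case I would first dispose of the inner product by sign. Writing $G:=\frac{p-1}{2}|g(\bar x,\bar y)|^2\ge0$, if $\bar x^Tf(\bar x,\bar y)<0$ then $\theta\,\bar x^Tf(\bar x,\bar y)\le\bar x^Tf(\bar x,\bar y)$ because $\theta>1$, and Assumption \ref{assumption2} closes the estimate at once with constant $K_1$. If $\bar x^Tf(\bar x,\bar y)\ge0$, I would use the identity $\theta\,\bar x^Tf(\bar x,\bar y)+G=\theta\bigl(\bar x^Tf(\bar x,\bar y)+G\bigr)-(\theta-1)G\le\theta\bigl(\bar x^Tf(\bar x,\bar y)+G\bigr)\le\theta K_1(1+|\bar x|^2+|\bar y|^2)$. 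Hence in both subcases the left-hand side is at most $\theta K_1(1+|\bar x|^2+|\bar y|^2)$.

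It then remains to show $\theta(1+|\bar x|^2+|\bar y|^2)\le 2(1\vee\beta^{-1})(1+|x|^2+|y|^2)$, and this final coefficient bookkeeping is the only genuine obstacle, since $\theta$ can be large and the naive bound on $\theta|\bar y|^2$ looks cubic. The point is that here $|\bar x|=\mu^{-1}(h(\Delta))$, so $|\bar y|\le|\bar x|<|x|$; therefore $\theta|\bar x|^2=|x||\bar x|\le|x|^2$, while $\theta|\bar y|^2\le\theta|\bar x||\bar y|=|x||\bar y|\le|x||y|\le\frac12(|x|^2+|y|^2)$, which tames the apparently cubic term. Finally $\theta=|x|/|\bar x|\le|x|/\beta\le\frac{1}{2\beta}(1+|x|^2)$ from $2|x|\le1+|x|^2$. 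Collecting these three estimates and comparing coefficients against $2(1\vee\beta^{-1})$ (using $\beta\ge1$, which makes the coefficient of $|x|^2$ come out to exactly $2$) yields the stated bound; everything else is a direct appeal to Assumption \ref{assumption2} together with the contraction and direction-preserving properties of $\pi_{\Delta}$.
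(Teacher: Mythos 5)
Your proof is correct and follows essentially the same route as the paper's: both reduce the claim to Assumption \ref{assumption2} evaluated at the truncated point $(\pi_{\Delta}(x),\pi_{\Delta}(y))$, split on whether the truncation is active at $x$, and in the active case absorb the scaling factor $\theta=|x|/\mu^{-1}(h(\Delta))$ via the same three estimates $\theta|\pi_{\Delta}(x)|^2\le|x|^2$, $\theta|\pi_{\Delta}(y)|^2\le|x||y|$, and $2|x|\le 1+|x|^2$, before passing from $\mu^{-1}(h(\Delta))$ to $\mu^{-1}(h(1))$ by monotonicity of $h$. The only cosmetic difference is your sign split on $\bar{x}^{T}f(\bar{x},\bar{y})$, which the paper avoids by bounding that inner product directly with Assumption \ref{assumption2} (the nonnegative diffusion term can simply be dropped); the resulting constants agree.
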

\begin{proof}
Fix any $\Delta\in(0,1].$ For $x\in\R^n$ with $|x|\leq\mu^{-1}(h(\Delta))$ and any $y\in \R^n$,by Assumption \ref{assumption2} we have,
\begin{equation*}
\begin{split}
&\quad x^Tf_{\Delta}(x,y)+\frac{p-1}{2}|g_{\Delta}(x,y)|^2\\
&=\pi_{\Delta}(x)^Tf(\pi_{\Delta}(x),\pi_{\Delta}(y))+\frac{p-1}{2}|g(\pi_{\Delta}(x),\pi_{\Delta}(y))|^2\\
&\leq K_1(1+|\pi_{\Delta}(x)|^2+|\pi_{\Delta}(y)|^2)\\
&\leq K_1(1+|x|^2+|y|^2),
\end{split}
\end{equation*}
which implies the desired assertion (\ref{lemma1gongshi}). On the other hand, for $x\in \R^n$ with $|x|>\mu^{-1}(h(\Delta))$ and any $y\in\R^n,$by Assumption \ref{assumption2}, we have
\begin{equation*}
\begin{split}
&\quad x^Tf_{\Delta}(x,y)+\frac{p-1}{2}|g_{\Delta}(x,y)|^2\\
&=\pi_{\Delta}(x)^Tf(\pi_{\Delta}(x),\pi_{\Delta}(y))+\frac{p-1}{2}|g(\pi_{\Delta}(x),\pi_{\Delta}(y))|^2+(x-\pi_{\Delta}(x))^Tf(\pi_{\Delta}(x),\pi_{\Delta}(y))\\
&\leq K_1(1+|\pi_{\Delta}(x)|^2+|\pi_{\Delta}(y)|^2)+(x-\pi_{\Delta}(x))^Tf(\pi_{\Delta}(x),\pi_{\Delta}(y))\\
&\leq K_1(1+|\pi_{\Delta}(x)|^2+|\pi_{\Delta}(y)|^2)+(\frac{|x|}{\mu^{-1}(h(\Delta))}-1)\pi_{\Delta}(x)^Tf(\pi_{\Delta}(x),\pi_{\Delta}(y))\\
&\leq\frac{|x|}{\mu^{-1}(h(\Delta))}K_1(1+|\pi_{\Delta}(x)|^2+|\pi_{\Delta}(y)|^2)\\
&\leq K_1(1\vee\frac{1}{\mu^{-1}(h(\Delta))})(|x|+|x|^2+|x||y|),
\end{split}
\end{equation*}
by the element inequality $ab\leq(a^2+b^2)/2$, we can get that
\begin{equation*}
\begin{split}
&\quad K_1(1\vee\frac{1}{\mu^{-1}(h(\Delta))})(|x|+|x|^2+|x||y|)\\
&\leq 2K_1(1\vee\frac{1}{\mu^{-1}(h(\Delta))})(1+|x|^2+|y|^2)\\
&\leq \hat{k}(1+|x|^2+|y|^2),
\end{split}
\end{equation*}
where $\hat{k}=2K_1(1\vee\frac{|x|}{\mu^{-1}(h(\Delta))}).$
\end{proof}
The following lemma is useful to observe that the truncated functions $f_\Delta$ and $g_\Delta$ preserve assumption \ref{assumption4} perfectly.
\begin{lemma}
Let Assumption \ref{assumption4} hold. Then, for all $\Delta\in(0,1],$ we have
\begin{equation*}
\begin{split}
&|f_\Delta(x,y)-f_\Delta(\bar{x},\bar{y})|^2\vee|g_\Delta(x,y)-g_\Delta(\bar{x},\bar{y})|^2\\
&\leq H_2(1+|x|^\rho+|y|^\rho+|\bar{x}|^\rho+|\bar{y}|^\rho)(|x-\bar{x}|^2+|y-\bar{y}|^2),
\end{split}
\end{equation*}
for all $x,y,\bar{x},\bar{y}\in\R^n.$
\end{lemma}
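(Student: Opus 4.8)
The plan is to reduce the claim to Assumption \ref{assumption4} evaluated at the truncated arguments and then absorb the truncation using two elementary monotonicity properties of the projection $\pi_\Delta$. Since by definition $f_\Delta(x,y)=f(\pi_\Delta(x),\pi_\Delta(y))$ and $g_\Delta(x,y)=g(\pi_\Delta(x),\pi_\Delta(y))$, I would first apply Assumption \ref{assumption4} directly to the four vectors $\pi_\Delta(x),\pi_\Delta(y),\pi_\Delta(\bar x),\pi_\Delta(\bar y)\in\R^n$, which gives
\begin{equation*}
\begin{split}
&|f_\Delta(x,y)-f_\Delta(\bar x,\bar y)|^2\vee|g_\Delta(x,y)-g_\Delta(\bar x,\bar y)|^2\\
&\leq H_2\bigl(1+|\pi_\Delta(x)|^\rho+|\pi_\Delta(y)|^\rho+|\pi_\Delta(\bar x)|^\rho+|\pi_\Delta(\bar y)|^\rho\bigr)\bigl(|\pi_\Delta(x)-\pi_\Delta(\bar x)|^2+|\pi_\Delta(y)-\pi_\Delta(\bar y)|^2\bigr).
\end{split}
\end{equation*}
It then suffices to dominate this right-hand side by the asserted bound, and crucially the constant $H_2$ and the exponent $\rho$ are carried over unchanged.

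The two facts I would invoke are: (i) $\pi_\Delta$ does not increase the norm, i.e. $|\pi_\Delta(z)|\leq|z|$ for every $z\in\R^n$; and (ii) $\pi_\Delta$ is non-expansive, i.e. $|\pi_\Delta(z)-\pi_\Delta(w)|\leq|z-w|$ for all $z,w\in\R^n$. Fact (i) is immediate from the definition, since $|\pi_\Delta(z)|=|z|\wedge\mu^{-1}(h(\Delta))\leq|z|$; granting it, the polynomial factor obeys $1+|\pi_\Delta(x)|^\rho+|\pi_\Delta(y)|^\rho+|\pi_\Delta(\bar x)|^\rho+|\pi_\Delta(\bar y)|^\rho\leq 1+|x|^\rho+|y|^\rho+|\bar x|^\rho+|\bar y|^\rho$. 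Granting fact (ii), the increment factor obeys $|\pi_\Delta(x)-\pi_\Delta(\bar x)|^2+|\pi_\Delta(y)-\pi_\Delta(\bar y)|^2\leq|x-\bar x|^2+|y-\bar y|^2$. Substituting these two bounds into the displayed inequality produces exactly the claim, uniformly in $\Delta\in(0,1]$.

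The only substantive point, and the step I expect to be the main obstacle, is fact (ii). The cleanest route is to observe that $\pi_\Delta$ is precisely the metric (nearest-point) projection of $\R^n$ onto the closed ball $\{z\in\R^n:|z|\leq\mu^{-1}(h(\Delta))\}$, a closed convex set, and that such projections are always $1$-Lipschitz; this reduces fact (ii) to a standard one-line citation from convex analysis. If a self-contained argument is preferred, I would instead split into cases according to whether $|z|$ and $|w|$ lie below or above the radius $r:=\mu^{-1}(h(\Delta))$: when both are at most $r$ the map is the identity and (ii) is trivial, and the remaining effort is confined to the cases where one or both vectors are truncated, where one verifies that the radial shrinking $z\mapsto(r/|z|)z$ toward the origin cannot increase distances. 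This case analysis is elementary but slightly tedious, which is why the convex-projection viewpoint is the more economical way to close the proof.
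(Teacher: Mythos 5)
Your proposal is correct and follows essentially the same route as the paper's proof: apply Assumption \ref{assumption4} at the truncated points $\pi_\Delta(x),\pi_\Delta(y),\pi_\Delta(\bar x),\pi_\Delta(\bar y)$, then use $|\pi_\Delta(z)|\leq|z|$ and $|\pi_\Delta(z)-\pi_\Delta(w)|\leq|z-w|$ to recover the stated bound with the same $H_2$ and $\rho$. The only difference is cosmetic: the paper simply asserts the non-expansiveness of $\pi_\Delta$, whereas you justify it via the standard $1$-Lipschitz property of the nearest-point projection onto a closed convex ball, which is a perfectly sound (and arguably tidier) way to close that step.
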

\begin{proof}
\begin{equation*}
\begin{split}
&\quad|f_\Delta(x,y)-f_\Delta(\bar{x},\bar{y})|^2\vee|g_\Delta(x,y)-g_\Delta(\bar{x},\bar{y})|^2\\
&=|f(\pi_{\Delta}(x),\pi_{\Delta}(y))-f(\pi_{\Delta}(\bar{x}),\pi_{\Delta}(\bar{y}))|^2\vee|g(\pi_{\Delta}(x),\pi_{\Delta}(y))-g(\pi_{\Delta}(\bar{x}),\pi_{\Delta}(\bar{y}))|^2\\
&\leq H_2(1+|\pi_{\Delta}(x)|^\rho+|\pi_{\Delta}(y)|^\rho+|\pi_{\Delta}(\bar{x})|^\rho+|\pi_{\Delta}(\bar{y})|^\rho)(|\pi_{\Delta}(x)-\pi_{\Delta}(\bar{x})|^2+|\pi_{\Delta}(y)-\pi_{\Delta}(\bar{y})|^2),
\end{split}
\end{equation*}
for all $x,y,\bar{x},\bar{y}\in\R^n.$ Noting
\begin{equation*}
\begin{split}
&|\pi_{\Delta}(x)|\leq |x|,~~|\pi_{\Delta}(y)|\leq|y|,~~|\pi_{\Delta}(\bar{x})|\leq|\bar{x}|,~~|\pi_{\Delta}(\bar{y})|\leq|\bar{y}|,\\
&|\pi_{\Delta}(x)-\pi_{\Delta}(\bar{x})|^2\leq|x-\bar{x}|^2,~~|\pi_{\Delta}(y)-\pi_{\Delta}(\bar{y})|^2\leq|y-\bar{y}|^2,
\end{split}
\end{equation*}
we get
\begin{equation*}
\begin{split}
&|f_\Delta(x,y)-f_\Delta(\bar{x},\bar{y})|^2\vee|g_\Delta(x,y)-g_\Delta(\bar{x},\bar{y})|^2\\
&\leq H_2(1+|x|^\rho+|y|^\rho+|\bar{x}|^\rho+|\bar{y}|^\rho)(|x-\bar{x}|^2+|y-\bar{y}|^2).
\end{split}
\end{equation*}
\end{proof}
Moreover, we also observe from Assumption \ref{assumption4} that
\begin{equation}
\label{muquzhiyiju}
|f(x,y)|\vee|g(x,y)|\leq H_3(|x|\vee|y|)^{(\rho+2)/2},\quad\forall |x|,|y|\geq1,
\end{equation}
where $H_3=\sqrt{6H_2}+|f(0,0)|+|g(0,0)|.$
\begin{lemma}
\label{lemma3}
For any $\Delta\in(0,1]$ and any $\hat{p}>0,$ we have
\begin{equation*}
\E|x(t)-\bar{x}(t)|^{\hat{p}}\leq C_{\hat{p}}\Delta^{\hat{p}/2}(h(\Delta))^{\hat{p}},\quad \forall t\geq 0,
\end{equation*}
where $C_{\hat{p}}$ is a positive constant dependent only on $\hat{p}.$ Consequently
\begin{equation*}
\lim_{\Delta\rightarrow 0}E|x(t)-\bar{x}(t)|^{\hat{p}}=0,\quad \forall t\geq 0.
\end{equation*}
In what follows, we will use $C_{\hat{p}}$ to stand for generic positive real constants dependent only on $\hat{p}$ and its values may change between occurrences. Fix $\Delta\in(0,1]$ arbitrarily. For any $t\geq 0$, there is a unique integer $k\geq 0$ such that $t_{k}\leq t<t_{k+1}$.
\par
It should be pointed out that this lemma is need to be proved only for $\hat{p}\geq 2$. However, it is easy to see that this lemma holds for any $\hat{p}\in(0,2)$ as well. In fact, by the H\"older inequality, for any $\hat{p}\in(0,2)$, we have
\begin{equation*}
\E|x(t)-\bar{x}(t)|^{\hat{p}}\leq(\E|x(t)-\bar{x}(t)|^2)^{\hat{p}/2}\leq(C_2\Delta(h(\Delta))^2)^{\hat{p}/2}=C_{\hat{p}}\Delta^{\hat{p}/2}(h(\Delta))^{\hat{p}}.
\end{equation*}
\begin{proof}
When $\hat{p}\geq2$, by (\ref{fgbeikongzhi}) as well as the H\"older inequality and the moment property of the It\^o integral, we can derive from (\ref{shuzhiweifen}) that
\begin{equation*}
\begin{split}
\E|x(t)-\bar{x}(t)|^{\hat{p}}&=\E|x(t)-x(t_{k})|^{\hat{p}}\\
&\leq C_{\hat{p}}(\E|\int^{t}_{t_k}f_{\Delta}(\bar{x}(s),\bar{x}(s-\tau))ds|^{\hat{p}}+\E|\int^{t}_{t_k}g_{\Delta}(\bar{x}(s),\bar{x}(s-\tau))dw(s)|^{\hat{p}})\\
&\leq C_{\hat{p}}(\Delta^{\hat{p}-1}\E\int^{t}_{t_k}|f_{\Delta}(\bar{x}(s),\bar{x}(s-\tau))|^{\hat{p}})ds+\Delta^{(\hat{p}-2)/2}\E\int^{t}_{t_k}|g_\Delta(\bar{x}(s),\bar{x}(s-\tau))|^{\hat{p}}ds\\
&\leq C_{\hat{p}}\Delta^{\frac{\hat{p}}{2}}(h(\Delta))^{\hat{p}}.
\end{split}
\end{equation*}
\end{proof}
\end{lemma}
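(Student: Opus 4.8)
The plan is to exploit the uniform bound \eqref{fgbeikongzhi} on the truncated coefficients, which is precisely what makes the estimate go through without any a priori moment control on the step process $\bar{x}$. First I would reduce to the case $\hat{p}\geq 2$, since for $\hat{p}\in(0,2)$ the bound follows at once from the H\"older (Jensen) inequality $\E|x(t)-\bar{x}(t)|^{\hat{p}}\leq(\E|x(t)-\bar{x}(t)|^2)^{\hat{p}/2}$ applied to the $\hat{p}=2$ estimate, exactly as already indicated in the statement.

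For $\hat{p}\geq 2$, fix $t\geq 0$ and let $k$ be the unique integer with $t_k\leq t<t_{k+1}$. Since $\bar{x}(t)=X_k=x(t_k)$ on this interval, we have $x(t)-\bar{x}(t)=x(t)-x(t_k)$, and the It\^o representation \eqref{shuzhiweifen} gives
\begin{equation*}
x(t)-x(t_k)=\int_{t_k}^{t}f_{\Delta}(\bar{x}(s),\bar{x}(s-\tau))\,ds+\int_{t_k}^{t}g_{\Delta}(\bar{x}(s),\bar{x}(s-\tau))\,dW(s).
\end{equation*}
Then I would apply the elementary inequality $(a+b)^{\hat{p}}\leq 2^{\hat{p}-1}(a^{\hat{p}}+b^{\hat{p}})$ to split the moment into a drift contribution and a diffusion contribution, handling each separately.

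For the drift term, H\"older's inequality over the interval $[t_k,t]$ of length at most $\Delta$ produces a factor $\Delta^{\hat{p}-1}$, while \eqref{fgbeikongzhi} bounds $|f_{\Delta}|^{\hat{p}}$ by $(h(\Delta))^{\hat{p}}$, giving a contribution of order $\Delta^{\hat{p}}(h(\Delta))^{\hat{p}}$. For the diffusion term, the moment inequality for It\^o integrals (Burkholder--Davis--Gundy type) yields $\E|\int_{t_k}^{t}g_{\Delta}\,dW(s)|^{\hat{p}}\leq C_{\hat{p}}\Delta^{(\hat{p}-2)/2}\E\int_{t_k}^{t}|g_{\Delta}|^{\hat{p}}\,ds$, and \eqref{fgbeikongzhi} again bounds the integrand, giving a contribution of order $C_{\hat{p}}\Delta^{\hat{p}/2}(h(\Delta))^{\hat{p}}$. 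Since $\Delta\leq 1$ and $\hat{p}\geq\hat{p}/2$, the drift term $\Delta^{\hat{p}}(h(\Delta))^{\hat{p}}$ is dominated by $\Delta^{\hat{p}/2}(h(\Delta))^{\hat{p}}$, so the two terms combine to the claimed bound.

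Finally, for the convergence assertion I would invoke the defining property $\Delta^{1/4}h(\Delta)\leq\hat{h}$, which gives $(h(\Delta))^{\hat{p}}\leq\hat{h}^{\hat{p}}\Delta^{-\hat{p}/4}$ and hence $\Delta^{\hat{p}/2}(h(\Delta))^{\hat{p}}\leq\hat{h}^{\hat{p}}\Delta^{\hat{p}/4}\to 0$ as $\Delta\to 0$. I expect no genuine obstacle here: the argument is routine once one recognises that the truncation bound \eqref{fgbeikongzhi} entirely replaces the usual need for moment bounds on the step process. The only points demanding care are the correct exponent $(\hat{p}-2)/2$ in the It\^o moment inequality and checking that the resulting $C_{\hat{p}}$ depends only on $\hat{p}$ and not on $\Delta$ or $t$ (which holds because the uniform bound $h(\Delta)$ is the only coefficient information used).
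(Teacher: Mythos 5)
Your proposal is correct and follows essentially the same route as the paper: reduce to $\hat{p}\geq 2$ via H\"older/Jensen, use $\bar{x}(t)=x(t_k)$ to write the error as the sum of a drift and a stochastic integral over $[t_k,t]$, bound the drift by H\"older's inequality (factor $\Delta^{\hat{p}-1}$) and the diffusion by the It\^o moment inequality (factor $\Delta^{(\hat{p}-2)/2}$), and control both integrands uniformly by $h(\Delta)$ from \eqref{fgbeikongzhi}. Your added remarks on absorbing the drift term via $\Delta\leq 1$ and on deducing the limit from $\Delta^{1/4}h(\Delta)\leq\hat{h}$ only make explicit what the paper leaves implicit.
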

\begin{lemma}
\label{lemma4}
Let Assumption \ref{assumption1} and \ref{assumption2} hold. Then,
\begin{equation}
\label{lemma4gongshi}
\sup_{0\leq\Delta\leq1}(\sup_{0\leq t\leq T}\E|x(t)|^p)\leq C,\quad \forall T>0.
\end{equation}
where, and from now on,  C stands for generic positive real constants dependent on $T,\xi$ etc. but independent of $\Delta$ and its values may change between occurrences.
\begin{proof}
Fix any $\Delta\in(0,1].$ By the It\^o formula, we derive from (\ref{shuzhiweifen}) that, for $0\leq t\leq T,$
\begin{equation*}
\begin{split}
\E|x(t)|^p&\leq|\xi(0)|^p+\E\int^{t}_{0}p|x(s)|^{p-2}\\
&\quad\times(x^{T}(s)f_{\Delta}(\bar{x}(s),\bar{x}(s-\tau))+\frac{p-1}{2}|g_{\Delta}(\bar{x}(s),\bar{x}(s-\tau))|^2)ds\\
&=|\xi(0)|^{p}+\E\int^{t}_{0}p|x(s)|^{p-2}\\
&\quad\times(\bar{x}^{T}(s)f_{\Delta}(\bar{x}(s),\bar{x}(s-\tau))+\frac{p-1}{2}|g_{\Delta}(\bar{x}(s),\bar{x}(s-\tau))|^2)ds\\
&\quad+\E\int^{t}_{0}p|x(s)|^{p-2}(x(s)-\bar{x}(s))^{T}f_{\Delta}(\bar{x}(s),\bar{x}(s-\tau))ds.
\end{split}
\end{equation*}
Noting from the Young inequality that
\begin{equation*}
a^{p-2}b\leq\frac{p-2}{p}a^{p}+\frac{2}{p}b^{p/2},\quad \forall a,b\geq 0,
\end{equation*}
as well as using lemma \ref{lemma1}, we then have
\begin{equation*}
\begin{split}
\E|x(t)^p|&\leq|\xi(0)|^p+\E\int^{t}_{0}p\hat{k}|x(s)|^{p-2}(1+|\bar{x}(s)|^2+|\bar{x}(s-\tau)|^2)ds\\
&\quad+(p-2)\E\int^{t}_{0}|x(s)|^pds\\
&\quad+2\E\int^{t}_{0}|x(s)-\bar{x}(s)|^{p/2}|f_{\Delta}(\bar{x}(s),\bar{x}(s-\tau))|^{p/2}ds\\
&\leq C+C\int^{t}_{0}(\E|x(s)|^p+\E|\bar{x}(s)|^p+\E|\bar{x}(s-\tau)|^p)ds\\
&\quad+2\E\int^{T}_{0}|x(s)-\bar{x}(s)|^{p/2}|f_{\Delta}(\bar{x}(s),\bar{x}(s-\tau))|^{p/2}ds.
\end{split}
\end{equation*}
But, by Lemma \ref{lemma3} with $p=\hat{p}$ and inequalities (\ref{fgbeikongzhi}), we have
\begin{equation*}
\begin{split}
&\quad \E\int^{T}_{0}|x(s)-\bar{x}(s)|^{p/2}|f_{\Delta}(\bar{x}(s),\bar{x}(s-\tau))|^{p/2}ds\\
&\leq(h(\Delta))^{p/2}\int^{T}_{0}\E(|x(s)-\bar{x}(s)|^{p/2})ds\\
&\leq C_{p}T(h(\Delta))^{p}\Delta^{p/4}\\
&\leq C_pT.
\end{split}
\end{equation*}
We therefore have
\begin{equation*}
\begin{split}
\E|x(t)|^p&\leq C+C\int^{t}_{0}(\E|x(s)|^p+\E|\bar{x}(s)|^p+\E|x(s-\tau)|^p)ds\\
&\leq C+C\int^{t}_{0}(\E|x(s)|^p+\E|\bar{x}(s)|^p+\E|x(s)|^p)ds+\int^{0}_{-\tau}\E|x(s)|^pds\\
&\leq C+C\int^{t}_{0}(\sup_{0\leq u\leq s}\E|x(u)|^p)ds.
\end{split}
\end{equation*}
As this holds for any $t\in[0,T]$ while the sum of the right hand side terms is non-decreasing in t, we then see
\begin{equation*}
\sup_{0\leq u\leq t}\E|x(u)|^p\leq C+C\int^{t}_{0}(\sup_{0\leq u\leq s}\E|x(u)|^p)ds.
\end{equation*}
The well-known Gronwall inequality yields that
\begin{equation*}
\sup_{0\leq u\leq T}\E|x(u)|^p\leq C.
\end{equation*}
As this holds for any $\Delta\in(0,1]$ while C is independent of $\Delta$, we see the required assertion (\ref{lemma4gongshi}).
\end{proof}
\end{lemma}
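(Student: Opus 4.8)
The plan is to apply the It\^o formula to $|x(t)|^p$ via the It\^o differential (\ref{shuzhiweifen}) and then reshape the resulting drift into a form on which Lemma \ref{lemma1} can act. The raw output of the It\^o formula involves $x^T(s)f_\Delta(\bar{x}(s),\bar{x}(s-\tau))+\frac{p-1}{2}|g_\Delta(\bar{x}(s),\bar{x}(s-\tau))|^2$, but the arguments are mismatched: Lemma \ref{lemma1} controls $\bar{x}^Tf_\Delta(\bar{x},\bar{y})+\frac{p-1}{2}|g_\Delta(\bar{x},\bar{y})|^2$, where the leading vector matches the argument of $f_\Delta$. First I would therefore write $x^T(s)=\bar{x}^T(s)+(x(s)-\bar{x}(s))^T$, splitting the drift into a ``good'' part amenable to Lemma \ref{lemma1} and a ``remainder'' cross term $p|x(s)|^{p-2}(x(s)-\bar{x}(s))^Tf_\Delta(\bar{x}(s),\bar{x}(s-\tau))$.

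Next I would bound the good part by $p\hat{k}|x(s)|^{p-2}(1+|\bar{x}(s)|^2+|\bar{x}(s-\tau)|^2)$ through Lemma \ref{lemma1}, and apply the Young inequality $a^{p-2}b\le\frac{p-2}{p}a^p+\frac{2}{p}b^{p/2}$ to each product to convert it into terms of the form $\E|x(s)|^p$, $\E|\bar{x}(s)|^p$ and $\E|\bar{x}(s-\tau)|^p$. For the remainder term, the key is to combine the uniform bound $|f_\Delta|\le h(\Delta)$ from (\ref{fgbeikongzhi}) with the closeness estimate of Lemma \ref{lemma3}. A further application of Young's inequality isolates a factor $\E|x(s)-\bar{x}(s)|^{p/2}$, which Lemma \ref{lemma3} with $\hat{p}=p/2$ bounds by $C_p\Delta^{p/4}(h(\Delta))^{p/2}$; multiplying by $(h(\Delta))^{p/2}$ gives $C_pT\,\Delta^{p/4}(h(\Delta))^p$, a quantity that remains uniformly bounded in $\Delta\in(0,1]$ precisely because of the constraint $\Delta^{1/4}h(\Delta)\le\hat{h}$ built into the truncation.

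Collecting everything, I would arrive at an inequality of the shape
\begin{equation*}
\E|x(t)|^p\le C+C\int_0^t\big(\E|x(s)|^p+\E|\bar{x}(s)|^p+\E|\bar{x}(s-\tau)|^p\big)\diff{s}.
\end{equation*}
Since $\bar{x}(s)=x(t_k)$ on each subinterval, the step-process moments are dominated by $\sup_{0\le u\le s}\E|x(u)|^p$, while the delayed term $\E|\bar{x}(s-\tau)|^p$ is controlled by the same supremum plus the contribution of the initial segment over $[-\tau,0]$, which is finite and $\Delta$-independent because $x(s)=\xi(s)$ there and $\xi\in L^p_{\mathscr{F}_0}([-\tau,0];\R^n)$. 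Taking the supremum over $t\in[0,T]$ and using that the right-hand side is nondecreasing in $t$ then yields
\begin{equation*}
\sup_{0\le u\le t}\E|x(u)|^p\le C+C\int_0^t\Big(\sup_{0\le u\le s}\E|x(u)|^p\Big)\diff{s},
\end{equation*}
and the Gronwall inequality delivers $\sup_{0\le u\le T}\E|x(u)|^p\le C$ with $C$ independent of $\Delta$, which is the claim.

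The hard part will be the control of the cross term $p|x(s)|^{p-2}(x(s)-\bar{x}(s))^Tf_\Delta$. Everything else is a routine Khasminskii--Gronwall argument, but this term is where the truncation mechanism does its essential work: naively $f_\Delta$ can be as large as $h(\Delta)\to\infty$, so the term is tamed only by simultaneously invoking the sharp moment estimate $\E|x-\bar{x}|^{p/2}\sim\Delta^{p/4}(h(\Delta))^{p/2}$ and the balance condition $\Delta^{1/4}h(\Delta)\le\hat{h}$. Getting this interplay right, so that the potentially divergent factor $(h(\Delta))^p$ is exactly offset by $\Delta^{p/4}$, is the crux of establishing uniformity in $\Delta$.
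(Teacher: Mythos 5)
Your proposal is correct and follows essentially the same route as the paper's proof: the same splitting $x^T(s)=\bar{x}^T(s)+(x(s)-\bar{x}(s))^T$ so that Lemma \ref{lemma1} applies, the same Young inequality, the same treatment of the cross term via (\ref{fgbeikongzhi}) together with Lemma \ref{lemma3} at exponent $p/2$ and the balance $\Delta^{1/4}h(\Delta)\leq\hat{h}$, and the same Gronwall conclusion. You even make explicit the step the paper leaves implicit, namely that $\Delta^{p/4}(h(\Delta))^p=(\Delta^{1/4}h(\Delta))^p\leq\hat{h}^p$ is what renders the bound uniform in $\Delta$.
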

\begin{theorem}
\label{thm:strotEM}
Let Assumptions \ref{assumption1}, \ref{assumption2}, \ref{assumption3}, \ref{assumption4} and \ref{assumption5} hold with $2p>(2+\rho)q$. Then, for any $\bar{q}\in[2,q)$ and $\Delta\in(0,1],$
\begin{equation}
\label{theorem1}
\E|y(T)-x(T)|^{\bar{q}}\leq C\bigg((\mu^{-1}(h(\Delta)))^{-(2p-(2+\rho)\bar{q})/2}+\Delta^{\bar{q}/2}(h(\Delta))^{\bar{q}}+\Delta^{\bar{q}\gamma}\bigg)
\end{equation}
and
\begin{equation}
\label{theorem2}
\E|y(T)-\bar{x}(T)|^{\bar{q}}\leq C\bigg((\mu^{-1}(h(\Delta)))^{-(2p-(2+\rho)\bar{q})/2}+\Delta^{\bar{q}/2}(h(\Delta))^{\bar{q}}+\Delta^{\bar{q}\gamma}\bigg).
\end{equation}

\begin{proof}
Fix $\bar{q}\in[2,q)$ and $\Delta\in(0,1]$ arbitrarily. Let $e_{\Delta}(t)=y(t)-x(t)$ for $t\geq0.$ For each integer $n>|x_0|$, define the stopping time
\begin{equation*}
\theta_{n}=\inf\{t\geq0:|y(t)|\vee|x(t)|\geq n\},
\end{equation*}
where we set inf$\emptyset=\infty$(as usual $\emptyset$ denotes the empty set). By the It\^o formula, we have that for any $0\leq t\leq T,$
\begin{equation}
\label{wucha}
\begin{split}
\E|e_{\Delta}(t\wedge\theta_{n})|^{\bar{q}}&=\E\int^{t\wedge\theta_{n}}_{0}\bar{q}|e_{\Delta}(s)|^{\bar{q}-2}\bigg(e_\Delta^T(s)[f(y(s),y(s-\tau))-f_{\Delta}(\bar{x}(s),\bar{x}(s-\tau))]\\
&\quad+\frac{\bar{q}-1}{2}|g(y(s),y(s-\tau))-g_{\Delta}(\bar{x}(s),\bar{x}(s-\tau))|^2\bigg)ds.
\end{split}
\end{equation}
By elementary inequality $2ab=2\sqrt{\varepsilon}a\frac{1}{\sqrt{\varepsilon}}b\leq\varepsilon a^2+\frac{1}{\varepsilon}b^2$ with $ \varepsilon=\frac{q-\bar{q}}{\bar{q}-1}$, we have
\begin{equation*}
\begin{split}
&\quad\frac{\bar{q}-1}{2}|g(y(s),y(s-\tau))-g_{\Delta}(\bar{x}(s),\bar{x}(s-\tau))|^2\\
&\leq\frac{\bar{q}-1}{2}\bigg((1+\frac{q-\bar{q}}{\bar{q}-1})|g(y(s),y(s-\tau))-g(x(s),x(s-\tau))|^2\\
&\quad+(1+\frac{\bar{q}-1}{q-\bar{q}})|g(x(s),x(s-\tau))-g_{\Delta}(\bar{x}(s),\bar{x}(s-\tau))|^2\bigg)\\
&=\frac{q-1}{2}|g(y(s),y(s-\tau))-g(x(s),x(s-\tau))|^2\\
&\quad+\frac{(\bar{q}-1)(q-1)}{2(q-\bar{q})}|g(x(s),x(s-\tau))-g_{\Delta}(\bar{x}(s),\bar{x}(s-\tau))|^2,
\end{split}
\end{equation*}
we get from (\ref{wucha}) that
\begin{equation}
\label{j1+j2}
\E|e_{\Delta}(t\wedge\theta_{n})|^{\bar{q}}\leq J_1+J_2,
\end{equation}
where
\begin{equation*}
\begin{split}
J_1&=\E\int^{t\wedge\theta_{n}}_{0}\bar{q}|e_{\Delta}(s)|^{\bar{q}-2}\bigg(e_{\Delta}^T(s)[f(y(s),y(s-\tau))-f_{\Delta}(x(s),x(s-\tau))]\\
&\quad+\frac{q-1}{2}|g(y(s),y(s-\tau))-g(x(s),x(s-\tau))|^2\bigg)ds
\end{split}
\end{equation*}
and
\begin{equation*}
\begin{split}
J_2&=\E\int^{t\wedge\theta_{n}}_{0}\bar{q}|e_{\Delta}(s)|^{\bar{q}-2}\bigg(e_{\Delta}^T(s)[f(x(s),x(s-\tau))-f_{\Delta}(\bar{x}(s),\bar{x}(s-\tau))]\\
&\quad+\frac{(\bar{q}-1)(q-1)}{2(q-\bar{q})}|g(x(s),x(s-\tau))-g_{\Delta}(\bar{x}(s),\bar{x}(s-\tau))|^2\bigg)ds.
\end{split}
\end{equation*}
By Assumption \ref{assumption3}, we have
\begin{equation*}
\begin{split}
J_1&\leq\bar{q}H_1\E\int^{t\wedge\theta_{n}}_{0}|e_{\Delta}(s)|^{\bar{q}-2}(|y(s)-x(s)|^2+|y(s-\tau)-x(s-\tau)|^2)ds\\
&\quad+\E\int^{t\wedge\theta_{n}}_{0}(-U(y(s),x(s))+U(y(s-\tau),x(s-\tau)))ds.
\end{split}
\end{equation*}
Moreover, by the property of the $\mathcal{U}$-class function U and Assumption \ref{assumption5}, we have
\begin{equation*}
\begin{split}
&\quad \E\int^{t\wedge\theta_{n}}_{0}(-U(y(s),x(s))+U(y(s-\tau),x(s-\tau)))ds\\
&\leq \E\int^{t\wedge\theta_{n}}_{0}(-U(y(s),x(s)))ds+\E\int^{t\wedge\theta_{n}}_{-\tau}U(y(s),x(s))ds\\
&=\E\int^{0}_{-\tau}U(y(s),x(s))ds\\
&\leq 0.
\end{split}
\end{equation*}
For any $p>2$, noting from the Young inequality that
\begin{equation*}
a^{p-2}b\leq\frac{p-2}{p}a^{p}+\frac{2}{p}b^{p/2},\quad \forall a,b\geq 0,
\end{equation*}
we then have
\begin{equation}
\label{j1}
\begin{split}
J_1&\leq\bar{q}H_1(\E\int^{t\wedge\theta_{n}}_{0}|e_{\Delta}(s)|^{\bar{q}}ds+E\int^{t\wedge\theta_{n}}_{0}|e_{\Delta}(s)|^{\bar{q}-2}|e_{\Delta}(s-\tau)|^2ds)\\
&\leq \bar{q}H_1(\E\int^{t\wedge\theta_{n}}_{0}|e_{\Delta}(s)|^{\bar{q}}ds+\E\int^{t\wedge\theta_{n}}_{0}\frac{\bar{q}-2}{\bar{q}}|e_{\Delta}(s)|^{\bar{q}}ds+\E\int^{t\wedge\theta_{n}}_{0}\frac{2}{q}|e_{\Delta}(s-\tau)|^{\bar{q}}ds)\\
&\leq\bar{q}H_1(\E\int^{t\wedge\theta_{n}}_{0}2|e_\Delta(s)|^{\bar{q}}ds+\E\int^{0}_{-\tau}\frac{2}{\bar{q}}|e_{\Delta}(s)|^{\bar{q}}ds)\\
&\leq 2\bar{q}H_1\E\int^{t\wedge\theta_{n}}_{0}|e_{\Delta}(s)|^{\bar{q}}ds.
\end{split}
\end{equation}
Rearranging $J_2$, we get
\begin{equation}
\label{j21+j22}
J_2\leq J_{21}+J_{22},
\end{equation}
where
\begin{equation}
\begin{split}
J_{21}=&\E\int^{t\wedge\theta_{n}}_{0}\bar{q}|e_{\Delta}(s)|^{\bar{q}-2}\bigg(e_{\Delta}^T(s)[f(x(s),x(s-\tau))-f_{\Delta}(x(s),x(s-\tau))]\\
&+\frac{(\bar{q}-1)(q-1)}{(q-\bar{q})}|g(x(s),x(s-\tau))-g_{\Delta}(x(s),x(s-\tau))|^2\bigg)ds\\
J_{22}=&\E\int^{t\wedge\theta_{n}}_{0}\bar{q}|e_{\Delta}(s)|^{\bar{q}-2}\bigg(e_{\Delta}^T(s)[f_{\Delta}(x(s),x(s-\tau))-f_{\Delta}(\bar{x}(s),\bar{x}(s-\tau))]\\
&+\frac{(\bar{q}-1)(q-1)}{(q-\bar{q})}|g_{\Delta}(x(s),x(s-\tau))-g_{\Delta}(\bar{x}(s),\bar{x}(s-\tau))|^2\bigg)ds.
\end{split}
\end{equation}
We estimate $J_{21}$ first. By the Young inequality for any $a,b\geq 0$ and $0\leq t\wedge\theta_n\leq t\leq T,$ we can show that
\begin{equation}
\begin{split}
\label{j21}
J_{21}&\leq \E\int^{t\wedge\theta_{n}}_{0}\bar{q}|e_{\Delta}(s)|^{\bar{q}-2}\bigg(0.5|e_{\Delta}(s)|^2+0.5|f(x(s),x(s-\tau))-f_{\Delta}(x(s),x(s-\tau))|^2\\
&\quad+\frac{(\bar{q}-1)(q-1)}{(q-\bar{q})}|g(x(s),x(s-\tau))-g_{\Delta}(x(s),x(s-\tau))|^2\bigg)ds\\
&\leq \frac{(\bar{q}-1)^2(q-2)}{(q-\bar{q})}\E\int^{t\wedge\theta_{n}}_{0}|e_{\Delta}(s)|^{\bar{q}}ds+\E\int^{t\wedge\theta_{n}}_{0}|f(x(s),x(s-\tau))-f_{\Delta}(x(s),x(s-\tau))|^{\bar{q}}ds\\
&\quad+\frac{2(\bar{q}-1)(q-1)}{(q-\bar{q})}|g(x(s),x(s-\tau))-g_{\Delta}(x(s),x(s-\tau))|^{\bar{q}}ds\\
&\leq C_1\E\int^{t\wedge\theta_{n}}_{0}|e_{\Delta}(s)|^{\bar{q}}ds+J_{23},
\end{split}
\end{equation}
where
\begin{equation*}
J_{23}=\E\int^{t\wedge\theta_{n}}_{0}|f(x(s),x(s-\tau))-f_{\Delta}(x(s),x(s-\tau))|^{\bar{q}}+|g(x(s),x(s-\tau))-g_{\Delta}(x(s),x(s-\tau))|^{\bar{q}}ds
\end{equation*}
and
\begin{equation*}
C_1=max\{\frac{(\bar{q}-1)^2(q-2)}{(q-\bar{q})},1,\frac{2(\bar{q}-1)(q-1)}{(q-\bar{q})}\}.
\end{equation*}
Due to $t\wedge\theta_{n}\leq T$ and Assumption \ref{assumption4}, we derive that
\begin{equation*}
\begin{split}
J_{23}&\leq 2\times10^{\bar{q}/2}C_1H_2\E\int^{T}_{0}\bigg(1+|x(s)|^{\rho\bar{q}/2}+|x(s-\tau)|^{\rho\bar{q}/2}+|\pi_\Delta(x(s))|^{\rho\bar{q}/2}+|\pi_\Delta(x(s-\tau))|^{\rho\bar{q}/2}\bigg)\\
&\quad\times\bigg(|x(s)-\pi_\Delta(x(s))|^{\bar{q}}+|x(s-\tau)-\pi_\Delta(x(s-\tau))|^{\bar{q}}\bigg)ds\\
&\leq 4\times10^{\bar{q}/2}C_1H_2\E\int^{T}_{0}\bigg(1+|x(s)|^{\rho\bar{q}/2}+|x(s-\tau)|^{\rho\bar{q}/2}\bigg)\\
&\quad\times\bigg(|x(s)-\pi_\Delta(x(s))|^{\bar{q}}+|x(s-\tau)-\pi_\Delta(x(s-\tau))|^{\bar{q}}\bigg)ds.
\end{split}
\end{equation*}
Using the H$\ddot{o}$dler inequality and lemma \ref{lemma4} yields
\begin{equation*}
\begin{split}
J_{23}&\leq 4\times10^{\bar{q}/2}C_1H_2\int^{T}_{0}\bigg(\E(1+|x(s)|^{p}+|x(s-\tau)|^{p})\bigg)^{\frac{\rho\bar{q}}{2p}}\\
&\quad\times\bigg(\E|x(s)-\pi_\Delta(x(s))|^{\frac{2p\bar{q}}{2p-\rho\bar{q}}}+\E|x(s-\tau)-\pi_\Delta(x(s-\tau))|^{\frac{2p\bar{q}}{2p-\rho\bar{q}}}\bigg)^{\frac{2p-\rho\bar{q}}{2p}}ds\\
&\leq4\times10^{\bar{q}/2}C_1H_2(1+2C)^{\frac{\rho\bar{q}}{2p}}\int^{T}_{0}\bigg(\E|x(s)-\pi_\Delta(x(s))|^{\frac{2p\bar{q}}{2p-\rho\bar{q}}}\\
&\quad+\E|x(s-\tau)-\pi_\Delta(x(s-\tau))|^{\frac{2p\bar{q}}{2p-\rho\bar{q}}}\bigg)^{\frac{2p-\rho\bar{q}}{2p}}ds\\
&\leq4\times10^{\bar{q}/2}C_1H_2(1+2C)^{\frac{\rho\bar{q}}{2p}}\int^{T}_{0}\bigg([P\{|x(s)|>\mu^{-1}(h(\Delta))\}]^{\frac{2p-(2+\rho)\bar{q}}{2p-\rho\bar{q}}}[\E|x(s)|^p]^{\frac{2\bar{q}}{2p-\rho\bar{q}}}\\
&\quad+[P\{|x(s-\tau)|>\mu^{-1}(h(\Delta))\}]^{\frac{2p-(2+\rho)\bar{q}}{2p-\rho\bar{q}}}[\E|x(s-\tau)|^p]^{\frac{2\bar{q}}{2p-\rho\bar{q}}}\bigg)^{\frac{2p-\rho\bar{q}}{2p}}ds\\
&\leq4\times10^{\bar{q}/2}C_1H_2(1+2C)^{\frac{(\rho+2)\bar{q}}{2p}}\int^{T}_{0}(\frac{\E|x(s)|^p}{(\mu^{-1}(h(\Delta)))^p})^{\frac{2p-(2+\rho)\bar{q}}{2p}}+(\frac{\E|x(s-\tau)|^p}{(\mu^{-1}(h(\Delta)))^p})^{\frac{2p-(2+\rho)\bar{q}}{2p}}ds\\
&\leq8\times10^{\bar{q}/2}C_1H_2(1+2C)(\mu^{-1}(h(\Delta)))^{-\frac{2p-(2+\rho)\bar{q}}{2}}.
\end{split}
\end{equation*}
Substituting this into (\ref{j23}) gives
\begin{equation*}
J_{21}\leq C_1\E\int^{t\wedge\theta_{n}}_{0}|e_{\Delta}(s)|^{\bar{q}}ds+8\times10^{\bar{q}/2}C_1H_2(1+2C)(\mu^{-1}(h(\Delta)))^{-\frac{2p-(2+\rho)\bar{q}}{2}}.
\end{equation*}
Similarly, we can show
\begin{equation*}
\begin{split}
J_{22}&\leq C_1\E\int^{t\wedge\theta_{n}}_{0}|e_{\Delta}(s)|^{\bar{q}}ds+C_2\int^{T}_{0}\bigg(\E|x(s)-\bar{x}(s)|^{\frac{2p\bar{q}}{2p-\rho\bar{q}}}+\E|x(s-\tau)-\bar{x}(s-\tau)|^{\frac{2p\bar{q}}{2p-\rho\bar{q}}}\bigg)^{\frac{2p-\rho\bar{q}}{2p}}ds\\
&\leq C_1\E\int^{t\wedge\theta_{n}}_{0}|e_{\Delta}(s)|^{\bar{q}}ds+C_2\bigg(2\int^{T}_{0}\E|x(s)-\bar{x}(s)|^{\bar{q}}ds+\int^{0}_{-\tau}\E|x(s)-\bar{x}(s)|^{\bar{q}}ds\bigg),
\end{split}
\end{equation*}
where $C_2$ and the following $C_3$, etc. are generic constants independent of $\Delta.$ By lemma \ref{lemma3}, we then have
\begin{equation}
\label{j23}
\begin{split}
J_{22}&\leq C_1\E\int^{t\wedge\theta_{n}}_{0}|e_{\Delta}(s)|^{\bar{q}}ds+C_2\bigg(2TC_{\bar{q}}\Delta^{\frac{\bar{q}}{2}}(h(\Delta))^{\bar{q}}+\int^{0}_{-\tau}|\xi(s)-\xi(\,s/\Delta\!)|^{\bar{q}}ds\bigg)\\
&\leq C_1\E\int^{t\wedge\theta_{n}}_{0}|e_{\Delta}(s)|^{\bar{q}}ds+C_3(\Delta^{\frac{\bar{q}}{2}}(h(\Delta))^{\bar{q}}+\Delta^{\bar{q}\gamma}).
\end{split}
\end{equation}
By (\ref{j21+j22})  we get that
\begin{equation}
\label{j2}
\begin{split}
J_2&\leq C_4\E\int^{t\wedge\theta_{n}}_{0}|e_{\Delta}(s)|^{\bar{q}}ds+C_4\bigg(\Delta^{\frac{\bar{q}}{2}}(h(\Delta))^{\bar{q}}+\Delta^{\bar{q}\gamma}+(\mu^{-1}(h(\Delta)))^{-\frac{2p-(2+\rho)\bar{q}}{2}}\bigg).
\end{split}
\end{equation}
Combining (\ref{j1+j2}), (\ref{j1}) and (\ref{j2})together, we get
\begin{equation*}
\begin{split}
\E|e_{\Delta}(t\wedge\theta)|^{\bar{q}}&\leq C_5\E\int^{t\wedge\theta_{n}}_{0}|e_{\Delta}(s)|^{\bar{q}}ds+C_5\bigg(\Delta^{\frac{\bar{q}}{2}}(h(\Delta))^{\bar{q}}+\Delta^{\bar{q}\gamma}+(\mu^{-1}(h(\Delta)))^{-\frac{2p-(2+\rho)\bar{q}}{2}}\bigg)\\
&\leq C_5\E\int^{t}_{0}\E|e_{\Delta}(s\wedge\theta_{n})|^{\bar{q}}ds+C_5\bigg(\Delta^{\frac{\bar{q}}{2}}(h(\Delta))^{\bar{q}}+\Delta^{\bar{q}\gamma}+(\mu^{-1}(h(\Delta)))^{-\frac{2p-(2+\rho)\bar{q}}{2}}\bigg).
\end{split}
\end{equation*}
An application of the Gronwall inequality yields that
\begin{equation*}
\E|e_{\Delta}(T\wedge\theta_{n})|^{\bar{q}}\leq C\bigg(\Delta^{\frac{\bar{q}}{2}}(h(\Delta))^{\bar{q}}+\Delta^{\bar{q}\gamma}+(\mu^{-1}(h(\Delta)))^{-\frac{2p-(2+\rho)\bar{q}}{2}}\bigg).
\end{equation*}
Using the well-known Fatou Lemma, we can let $n\rightarrow\infty$ to obtain the desired assertion (\ref{theorem1}). The other assertion (\ref{theorem2}) follows from (\ref{theorem1}) and lemma \ref{lemma3}. The proof is therefore complete.
\end{proof}
\end{theorem}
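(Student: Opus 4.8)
The plan is to control the error process $e_\Delta(t) = y(t) - x(t)$ by an It\^o-formula argument, followed by a splitting engineered to match the structure of Assumptions \ref{assumption3} and \ref{assumption4}. First I would introduce, for each integer $n$, the stopping time $\theta_n = \inf\{t \geq 0 : |y(t)| \vee |x(t)| \geq n\}$, so that all quantities stay bounded and the local estimates are legitimate; this stopping time is removed at the very end by Fatou's lemma. Applying the It\^o formula to $|e_\Delta(t \wedge \theta_n)|^{\bar q}$ produces a drift term of the form $\bar q |e_\Delta(s)|^{\bar q - 2}\bigl(e_\Delta^T(s)[f(y(s),y(s-\tau))-f_\Delta(\bar x(s),\bar x(s-\tau))] + \tfrac{\bar q - 1}{2}|g(y(s),y(s-\tau))-g_\Delta(\bar x(s),\bar x(s-\tau))|^2\bigr)$, with no contribution from the stochastic integral after taking expectations.

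The crucial manipulation is to split the diffusion difference as $[g(y)-g(x)] + [g(x)-g_\Delta(\bar x)]$ and apply $2ab \leq \varepsilon a^2 + \varepsilon^{-1} b^2$ with the specific choice $\varepsilon = (q-\bar q)/(\bar q - 1)$. This choice is arranged so that the coefficient $\tfrac{\bar q - 1}{2}(1+\varepsilon)$ multiplying $|g(y)-g(x)|^2$ becomes exactly $\tfrac{q-1}{2}$, which is precisely the constant in Assumption \ref{assumption3}. The bound then decomposes as $\E|e_\Delta(t\wedge\theta_n)|^{\bar q} \le J_1 + J_2$, where $J_1$ collects the genuine one-sided Lipschitz part and $J_2$ the truncation and step errors. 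For $J_1$ I would invoke Assumption \ref{assumption3}: the terms $-U(y(s),x(s)) + U(y(s-\tau),x(s-\tau))$ integrate to a non-positive boundary contribution, because the delayed $U$-integral may be shifted by $\tau$ and the error vanishes on $[-\tau,0]$, where $y$ and $x$ share the same initial data. After a Young-inequality step absorbing the $|e_\Delta(s-\tau)|^2$ factor into $|e_\Delta(s)|^{\bar q}$ and a delayed copy, $J_1$ is bounded by $2\bar q H_1 \int_0^{t\wedge\theta_n} \E|e_\Delta(s)|^{\bar q}\,ds$.

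For $J_2$ I would split once more into $J_{21}$, which measures the discrepancy between $(f,g)$ and $(f_\Delta, g_\Delta)$ evaluated at the \emph{true} numerical solution $x$, and $J_{22}$, which measures the discrepancy between $f_\Delta, g_\Delta$ at $x$ versus at the step process $\bar x$. The term $J_{22}$ is handled directly by Lemma \ref{lemma3} together with the H\"older continuity of the initial data in Assumption \ref{assumption5}, yielding the $\Delta^{\bar q/2}(h(\Delta))^{\bar q} + \Delta^{\bar q\gamma}$ contributions. Each of $J_{21}$ and $J_{22}$ also contributes a term of the form $C\int_0^{t\wedge\theta_n}\E|e_\Delta(s)|^{\bar q}\,ds$, which will later be absorbed by Gronwall.

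The main obstacle is the estimate of $J_{21}$, which is where the truncation rate $(\mu^{-1}(h(\Delta)))^{-(2p-(2+\rho)\bar q)/2}$ is born and where the hypothesis $2p > (2+\rho)q$ is indispensable. Since $f_\Delta$ and $f$ coincide whenever $|x(s)| \le \mu^{-1}(h(\Delta))$, the difference is supported on the event $\{|x(s)| > \mu^{-1}(h(\Delta))\}$; I would bound the integrand by Assumption \ref{assumption4} with a polynomial weight of degree $(\rho+2)\bar q/2$, then apply the H\"older inequality to separate a probability factor $\P\{|x(s)| > \mu^{-1}(h(\Delta))\}$ from a moment factor controlled by Lemma \ref{lemma4}, and finally use the Markov inequality to replace the probability by $(\mu^{-1}(h(\Delta)))^{-p}\E|x(s)|^p$. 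Tracking the H\"older exponents shows the net power of $\mu^{-1}(h(\Delta))$ is $-(2p-(2+\rho)\bar q)/2$, which is negative precisely because $\bar q < q$ and $2p > (2+\rho)q$ together force $2p > (2+\rho)\bar q$. Collecting $J_1$ and $J_2$, the Gronwall inequality eliminates the $\int \E|e_\Delta|^{\bar q}$ terms, and letting $n\to\infty$ via Fatou's lemma yields \eqref{theorem1}; the companion estimate \eqref{theorem2} then follows from \eqref{theorem1} and Lemma \ref{lemma3} through the triangle inequality $|y(T) - \bar x(T)| \le |y(T)-x(T)| + |x(T) - \bar x(T)|$.
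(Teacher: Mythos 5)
Your proposal is correct and follows essentially the same route as the paper's own proof: the same stopping time and It\^o formula setup, the same choice $\varepsilon=(q-\bar q)/(\bar q-1)$ to produce the constant $\tfrac{q-1}{2}$ matching Assumption \ref{assumption3}, the same $J_1+J_2$ and then $J_{21}+J_{22}$ decomposition, the same treatment of the delayed $U$-terms via a shift and the coincidence of $y$ and $x$ on $[-\tau,0]$, the same H\"older--Markov argument (with Lemma \ref{lemma4}) producing the $(\mu^{-1}(h(\Delta)))^{-(2p-(2+\rho)\bar q)/2}$ term, Lemma \ref{lemma3} with Assumption \ref{assumption5} for $J_{22}$, and the concluding Gronwall--Fatou step with \eqref{theorem2} deduced from \eqref{theorem1} and Lemma \ref{lemma3}.
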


The next corollary provides a clearer view of the convergence rate of the truncated EM method.

\begin{corollary}
In particular, recalling (\ref{muquzhiyiju}) we may define
\begin{equation}
\label{definemu}
\mu(u)=H_3u^{(2+\rho)/2},\quad u\geq1,
\end{equation}
and let
\begin{equation}
\label{defineh}
h(\Delta)=\Delta^{-\varepsilon}\quad for~some~\varepsilon\in(0,1/4]~and~\hat{h}\geq1,
\end{equation}
te get
\begin{equation}
\label{theorem3}
\E|y(T)-x(T)|^{\bar{q}}\leq O(\Delta^{[\varepsilon(2p-(2+\rho)\bar{q})/(2+\rho)]\wedge[\bar{q}(1-2\varepsilon)/2]\wedge[\bar{q}\gamma]})
\end{equation}
and
\begin{equation}
\label{theorem4}
\E|y(T)-\bar{x}(T)|^{\bar{q}}\leq O(\Delta^{[\varepsilon(2p-(2+\rho)\bar{q})/(2+\rho)]\wedge[\bar{q}(1-2\varepsilon)/2]\wedge[\bar{q}\gamma]}).
\end{equation}
When $\mu$ is defined by (\ref{definemu}), then $\mu^{-1}(u)=(u/H_3)^{2/(2+\rho)}.$ Substituting this and (\ref{defineh}) into (\ref{theorem1}) we get
\begin{equation*}
\E|y(T)-x(T)|^{\bar{q}}\leq C(\Delta^{\varepsilon(2p-(2+\rho)\bar{q})/(2+\rho)}+\Delta^{\bar{q}(1-2\varepsilon)/2}+\Delta^{\bar{q}\gamma)},
\end{equation*}
which is the required assertion (\ref{theorem3}). Similarly, we can show (\ref{theorem4}).
What's more, choosing p sufficiently large for
\begin{equation*}
\varepsilon(2p-(2+\rho)\bar{q})/(2+\rho)>\bar{q}(1-2\varepsilon)/2,
\end{equation*}
we can get the following assertions from (\ref{theorem3}) and (\ref{theorem4}) easily.
\begin{equation*}
\E|y(T)-x(T)|^{\bar{q}}\leq O(\Delta^{(\bar{q}(1-2\varepsilon)/2)\wedge (\bar{q}\gamma)})
\end{equation*}
and
\begin{equation*}
\E|y(T)-\bar{x}(T)|^{\bar{q}}\leq O(\Delta^{(\bar{q}(1-2\varepsilon)/2)\wedge (\bar{q}\gamma)}).
\end{equation*}
This corollary shows that if $\gamma$ is close to 0.5 (or bigger than half), this shows that the order of convergence is close to 0.5.
\end{corollary}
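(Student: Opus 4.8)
The plan is to treat this corollary as a direct specialization of Theorem~\ref{thm:strotEM}: once the generic truncation ingredients $\mu$ and $h$ are fixed by (\ref{definemu}) and (\ref{defineh}), every term on the right-hand side of (\ref{theorem1}) and (\ref{theorem2}) becomes an explicit power of $\Delta$, and the claimed rate is simply the slowest of these powers.

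First I would check that the concrete choices are admissible, so that Theorem~\ref{thm:strotEM} applies verbatim. The function $\mu(u)=H_3 u^{(2+\rho)/2}$ is strictly increasing with $\mu(u)\to\infty$, and by (\ref{muquzhiyiju}) it dominates $|f|\vee|g|$ for $r\geq 1$, so it is a legitimate controlling function; meanwhile $h(\Delta)=\Delta^{-\varepsilon}$ is strictly decreasing with $h(\Delta)\to\infty$ as $\Delta\to 0$, and the restriction $\varepsilon\in(0,1/4]$ gives $\Delta^{1/4}h(\Delta)=\Delta^{1/4-\varepsilon}\leq 1\leq\hat{h}$ for all $\Delta\in(0,1]$, which is exactly the step-size condition imposed on $h$. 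Hence all hypotheses behind (\ref{theorem1}) and (\ref{theorem2}) remain in force.

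Next I would invert $\mu$ explicitly, obtaining $\mu^{-1}(u)=(u/H_3)^{2/(2+\rho)}$, and therefore $\mu^{-1}(h(\Delta))=(H_3^{-1}\Delta^{-\varepsilon})^{2/(2+\rho)}$, a pure power of $\Delta$. Substituting this together with $h(\Delta)=\Delta^{-\varepsilon}$ into the three summands of (\ref{theorem1}) and collecting the exponent of $\Delta$ in each, the term $(\mu^{-1}(h(\Delta)))^{-(2p-(2+\rho)\bar{q})/2}$ contributes the exponent $\varepsilon(2p-(2+\rho)\bar{q})/(2+\rho)$, the term $\Delta^{\bar{q}/2}(h(\Delta))^{\bar{q}}$ contributes $\bar{q}/2-\varepsilon\bar{q}=\bar{q}(1-2\varepsilon)/2$, and $\Delta^{\bar{q}\gamma}$ contributes $\bar{q}\gamma$; all multiplicative constants are absorbed into the $O(\cdot)$. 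Because $\Delta\in(0,1]$, a sum of three powers of $\Delta$ is bounded by a constant times the power carrying the smallest exponent, which yields exactly (\ref{theorem3}); the bound (\ref{theorem4}) for $\bar{x}$ follows identically from (\ref{theorem2}).

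Finally, for the sharpened statement I would compare the first two exponents: since $2p>(2+\rho)q>(2+\rho)\bar{q}$ guarantees the first exponent is positive, enlarging $p$ increases it without bound, so for $p$ large enough $\varepsilon(2p-(2+\rho)\bar{q})/(2+\rho)>\bar{q}(1-2\varepsilon)/2$ and the minimum collapses to $\bar{q}(1-2\varepsilon)/2\wedge\bar{q}\gamma$; taking $\varepsilon$ small and $\gamma$ near $1/2$ then pushes the rate toward $1/2$. There is no genuine analytic obstacle here, as the work is purely the bookkeeping of exponents; the only points that require a moment's care are confirming the admissibility of the chosen $\mu$ and $h$, and verifying that the standing constraint $2p>(2+\rho)q$ keeps the first exponent positive so that the first summand really does vanish as $\Delta\to 0$.
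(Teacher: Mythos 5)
Your proposal is correct and follows essentially the same route as the paper: substitute $\mu^{-1}(u)=(u/H_3)^{2/(2+\rho)}$ and $h(\Delta)=\Delta^{-\varepsilon}$ into the three terms of (\ref{theorem1})--(\ref{theorem2}), read off the exponents $\varepsilon(2p-(2+\rho)\bar{q})/(2+\rho)$, $\bar{q}(1-2\varepsilon)/2$, $\bar{q}\gamma$, bound the sum by the smallest power since $\Delta\leq 1$, and then enlarge $p$ to drop the first exponent from the minimum. Your additional verification that the chosen $\mu$ and $h$ satisfy the standing admissibility conditions (monotonicity, divergence, and $\Delta^{1/4}h(\Delta)\leq\hat{h}$) is a point the paper leaves implicit, but it does not change the argument.
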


We end up this section by Theorem \ref{coro:pthstab}, which illustrates that the truncated EM method is within the scope of Theorem \ref{t1}.

\begin{theorem} \label{coro:pthstab}
Assume $f(0,0) = g(0,0) = 0$. In addition, suppose Assumptions \ref{assumption1}, \ref{assumption2}, \ref{assumption3}, \ref{assumption4} and \ref{assumption5} hold with $2p>(2+\rho)q$. Then, the $p$th moment exponentially stability of the truncated EM method is equivalent to that of the underlying SDDEs.
\end{theorem}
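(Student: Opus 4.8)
The plan is to recognise Theorem \ref{coro:pthstab} as a direct application of the general equivalence Theorem \ref{t1}. Once I check that the truncated EM scheme, together with the underlying SDDE \eqref{SDDE}, satisfies Assumptions \ref{a1} and \ref{a2} at a suitable moment order, the ``if and only if'' of Theorem \ref{t1} delivers the asserted equivalence with no additional argument. Thus the whole task reduces to extracting those two structural hypotheses from the concrete estimates already proved in Section \ref{sec:tEM}.

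First I would dispose of Assumption \ref{a2}, the finite-time strong convergence, which is essentially a restatement of Theorem \ref{thm:strotEM}. Fixing a moment order $\bar q\in[2,q)$, the bound \eqref{theorem1} gives $\sup_{\tau\le t\le \tau+T}\E|x(t)-y(t)|^{\bar q}\le C(T)\,\alpha(\Delta)$ with
\[
\alpha(\Delta)=(\mu^{-1}(h(\Delta)))^{-(2p-(2+\rho)\bar q)/2}+\Delta^{\bar q/2}(h(\Delta))^{\bar q}+\Delta^{\bar q\gamma},
\]
which is increasing in $\Delta$ and tends to $0$ as $\Delta\to 0$ precisely because $2p>(2+\rho)q$. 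The only bookkeeping is to carry the initial-data factor $\|\xi\|^{\bar q}$ through the generic constants and to align the time window via the flow property and the autonomy of \eqref{SDDE}, exactly in the form required by \eqref{qiangshoulian}.

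Next I would verify Assumption \ref{a1}, the short-time moment boundedness of both solutions. For the numerical solution this is Lemma \ref{lemma4} specialised to $T=\tau$ (its $p$th-moment bound dominates the $\bar q$th one by H\"older, since $\bar q\le p$); for the SDDE solution it follows from the standard It\^o/Khasminskii moment estimate under Assumptions \ref{assumption1} and \ref{assumption2}. The delicate point, and the step I expect to be the main obstacle, is \emph{homogeneity}: Assumption \ref{a1} demands a bound proportional to $\|\xi\|^{\bar q}$, not an affine bound $C(1+\|\xi\|^{\bar q})$, whereas the raw Khasminskii inequality carries the additive ``$1$''. This is exactly where the new hypothesis $f(0,0)=g(0,0)=0$ is indispensable: it makes the origin an equilibrium and, together with the polynomial structure \eqref{muquzhiyiju}, lets me upgrade the affine estimate to a genuinely homogeneous one over the finite window $[-\tau,\tau]$, so that the constant $C^\ast$ is independent of $\xi$.

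Finally, with Assumptions \ref{a1} and \ref{a2} established for the exponent $\bar q\in[2,q)$, I would invoke Theorem \ref{t1}: Lemma \ref{l1} shows that stability of the SDDE forces stability of the scheme for all sufficiently small $\Delta$, and Lemma \ref{l2} gives the converse. I would also flag the one subtlety in the moment bookkeeping: the consistent stability exponent is $\bar q\in[2,q)$ rather than the Khasminskii exponent $p$ itself, because $2p>(2+\rho)q$ forces $p>q$ and strong convergence is only available strictly below order $q$; the larger exponent $p$ enters only to control the truncation term $J_{23}$ inside the proof of Theorem \ref{thm:strotEM} and to furnish the dominating moment bound, not as the stability order.
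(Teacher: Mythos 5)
Your proposal is correct and follows essentially the same route as the paper, whose entire proof is the single sentence that the theorem is ``a straight application of Theorem \ref{thm:strotEM} and Theorem \ref{t1} by noting that with $f(0,0)=g(0,0)=0$ all the constant term on the right hand side in Assumption \ref{assumption2} is dismissed.'' Your additional bookkeeping --- the homogeneous (rather than affine) moment bounds needed for Assumption \ref{a1}, and the observation that the argument really yields equivalence at order $\bar q\in[2,q)$ since strong convergence is unavailable at order $p>q$ --- is more careful than the paper itself, which silently identifies the two exponents.
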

The proof of this theorem is a straight application of Theorem \ref{thm:strotEM} and Theorem \ref{t1} by noting that with $f(0,0) = g(0,0) = 0$ all the constant term on the right hand side in Assumption \ref{assumption2} is dismissed.

\section{Numerical simulations}\label{sec:numsim}
A two-dimensional SDDE considered in this part to demonstrate the theoretical results proved in previous sections.
\begin{example}
Consider a two-dimensional SDDE
\begin{equation}
\label{sddeexample}
\left\{
\begin{array}{lr}
dx_1(t)=(|x_2(t-\tau)|^{4/3}-x_1^{3}(t))dt+(|x_1(t)|^{3/2}+x_2(t-\tau))dw(t),&\\
dx_2(t)=(|x_1(t-\tau)|^{4/3}-x_2^{3}(t))dt+(|x_2(t)|^{3/2}+x_1(t-\tau))dw(t),&
\end{array}
\right.
\end{equation}
We can therefore regard (\ref{sddeexample}) as an SDDE in $\R^2$ with the coefficients
\begin{equation*}
f(x,y)=
\begin{pmatrix}
|y_2|^{4/3}-x_1^3\\
|y_1|^{4/3}-x_2^3
\end{pmatrix}
\end{equation*}
and
\begin{equation*}
g(x,y)=
\begin{pmatrix}
|x_1|^{3/2}+y_2\\
|x_2|^{3/2}+y_1
\end{pmatrix}
\end{equation*}
for $x,y\in\R^2.$
\\
It is obvious that these coefficients are locally Lipschitz continuous, namely, they satisfy Assumption \ref{assumption1}. We also assume that the initial data satisfy Assumption \ref{assumption5}. We then derive
\begin{equation*}
\begin{split}
&x^{T}f(x,y)+\frac{p-1}{2}|g(x,y)|^2\\
&\leq |x_1||y_2|^{4/3}-x_1^4+|x_2||y_1|^{4/3}-x_2^4\\
&+\frac{p-1}{2}\big(|x_1|^3+2|x_1|^{3/2}y_2+|y_2|^2+|x_2|^3+2|x_2|^{3/2}y_1+|y_1|^2\big).
\end{split}
\end{equation*}
But, by the Young inequality,
\begin{equation*}
|x||y|^{4/3}=(|x|^3)^{1/3}(|y|^2)^{2/3}\leq|x|^3+|y|^2,
\end{equation*}
we therefore have
\begin{equation*}
\begin{split}
&\quad x^{T}f(x,y)+\frac{p-1}{2}|g(x,y)|^2\\
&\leq |x_1|^3+|y_2|^2-x_1^4+|x_2|^3+|y_1|^2-x_2^4+(p-1)|x_1|^3+(p-1)|x_2|^3+(p-1)|y_1|^2+(p-1)|y_2|^2\\
&\leq a_2(1+|y|^2),
\end{split}
\end{equation*}
where $a_2=(2a_1\vee p)$ and $a_1=\sup(-u^4+pu^3)$.
That is, Assumption \ref{assumption2} is satisfied as well.
Then, consider Assumption \ref{assumption3}. For $x, y, \bar{x}, \bar{y} \in \R$, it is easy to show that
\begin{equation}
\label{assumption2yanzheng}
\begin{split}
&\quad(x-\bar{x})^T(f(x,y)-f(\bar{x},\bar{y})))\\
&=(x_1-\bar{x}_1)(|y_2|^{4/3}-|\bar{y}_2|^{4/3}-x_1^3+\bar{x}_1^3)+(x_2-\bar{x}_2)(|y_1|^{4/3}-|\bar{y}_1|^{4/3}-x_2^3+\bar{x}_2^3)\\
&=(x_1-\bar{x}_1)(|y_2|^{4/3}-|\bar{y}_2|^{4/3})-(x_1-\bar{x}_1)^2(x_1^2+x_1\bar{x}_1+\bar{x}_1^2)\\
&\quad+(x_2-\bar{x}_2)(|y_1|^{4/3}-|\bar{y}_1|^{4/3})-(x_2-\bar{x}_2)^2(x_2^2+x_2\bar{x}_2+\bar{x}_2^2)\\
&\leq\frac{1}{2}(x_1-\bar{x}_1)^2+\frac{1}{2}(|y_2|^{4/3}-|\bar{y}_2|^{4/3})^2-\frac{1}{2}(x_1-\bar{x}_1)^2(x_1^2+\bar{x}_1^2)\\
&\quad+\frac{1}{2}(x_2-\bar{x}_2)^2+\frac{1}{2}(|y_1|^{4/3}-|\bar{y}_1|^{4/3})^2-\frac{1}{2}(x_2-\bar{x}_2)^2(x_2^2+\bar{x}_2^2).
\end{split}
\end{equation}
But, by the mean value theorem,
\begin{equation*}
(|y|^{4/3}-|\bar{y}|^{4/3})^2\leq\frac{16}{9}|y-\bar{y}|^2(|y|^{1/3}+|\bar{y}|^{1/3})^2\leq4|y-\bar{y}|^2(|y|^{2/3}+|\bar{y}|^{2/3}).
\end{equation*}
Let $a_3=\sup(8u^{2/3}-0.5u^2)$. Then
\begin{equation*}
(|y|^{4/3}-|\bar{y}^{4/3}|)^2\leq a_3|y-\bar{y}|^2+0.25|y-\bar{y}|^2(y^2+\bar{y}^2).
\end{equation*}
Substituting this into (\ref{assumption2yanzheng}) yields
\begin{equation*}
\begin{split}
&\quad(x-\bar{x})^T(f(x,y)-f(\bar{x},\bar{y})))\\
&\leq (x_1-\bar{x_1})^2+a_3|y_2-\bar{y_2}|^2+\frac{1}{4}|y_2-\bar{y_2}|^2(y_2^2+\bar{y_2}^2)-\frac{1}{2}(x_1-\bar{x_1})^2(x_1^2+\bar{x_1}^2)\\
&\quad+(x_2-\bar{x_2})^2+a_3|y_1-\bar{y_1}|^2+\frac{1}{4}|y_1-\bar{y_1}|^2(y_1^2+\bar{y_1}^2)-\frac{1}{2}(x_2-\bar{x_2})^2(x_2^2+\bar{x_2}^2).
\end{split}
\end{equation*}
Similarly, we can show that
\begin{equation*}
\begin{split}
&\quad\frac{q-1}{2}|g(x,y)-g(\bar{x},\bar{y})|^2\\
&\leq a_4(x_1-\bar{x}_1)^2+\frac{1}{4}(x_1-\bar{x}_1)^2(x_1^2+\bar{x}_1^2)+(q-1)|y_2-\bar{y}_2|^2\\
&\quad+a_4(x_2-\bar{x}_2)^2+\frac{1}{4}(x_2-\bar{x}_2)^2(x_2^2+\bar{x}_2^2)+(q-1)|y_1-\bar{y}_1|^2,
\end{split}
\end{equation*}
where $a_4=\sup9(q-1)u-0.5u^2$, then
\begin{equation*}
\begin{split}
&\quad(x-\bar{x})^T(f(x,y)-f(\bar{x},\bar{y})))+\frac{q-1}{2}|g(x,y)-g(\bar{x},\bar{y})|^2\\
&\leq a_5\bigg((x_1-\bar{x}_1)^2+(x_2-\bar{x}_2)^2+(y_1-\bar{y}_1)^2+(y_2-\bar{y}_2)^2\bigg)\\
&\quad-\frac{1}{4}\bigg((x_1-\bar{x}_1)^2(x_1^2+\bar{x}_1^2)+(x_2-\bar{x}_2)^2(x_2^2+\bar{x}_2^2)\bigg)\\
&\quad+\frac{1}{4}\bigg((y_1-\bar{y}_1)^2(y_1^2+\bar{y}_1^2)+(y_2-\bar{y}_2)^2(y_2^2+\bar{y}_2^2)\bigg),
\end{split}
\end{equation*}\
where $a_5=(1\vee a_3\vee a_4 \vee (q-1))$
\\
and $U(x,\bar{x})=\frac{1}{4}\bigg((x_1-\bar{x}_1)^2(x_1^2+\bar{x}_1^2)+(x_2-\bar{x}_2)^2(x_2^2+\bar{x}_2^2)\bigg)$.
\\
 It is obvious that $U\in\mathcal{U}$. In other words, we have shown that Assumption \ref{assumption3} is satisfied too.
Finally, consider Assumption \ref{assumption4}, by the mean value theorem and elementary inequality we can have the following alternative estimate
\begin{equation}
\label{f-f}
\begin{split}
&\quad|f(x,y)-f(\bar{x},\bar{y})|^2\\
&\leq a_7\bigg((y_2-\bar{y}_2)^2(1+y_2^4+\bar{y}_2^4)+(x_1-\bar{x}_1)^2(x_1^4+\bar{x}_1^4)\\
&\quad+(y_1-\bar{y}_1)^2(1+y_1^4+\bar{y}_1^4)+(x_2-\bar{x}_2)^2(x_2^4+\bar{x}_2^4)\bigg)\\
&\leq a_7(1+|y|^4+|\bar{y}|^4+|x|^4+|\bar{x}|^4)(|x-\bar{x}|^2+|y-\bar{y}|^2),
\end{split}
\end{equation}
where $a_7=(\frac{128}{p}a_6\vee 36$ and $a_6=\sup(u^{2/3}-u^4)$.
\\
Similarly,
\begin{equation}
\label{g-g}
\begin{split}
|g(x,y)-g(\bar{x},\bar{y})|^2&\leq a_9(x_1-\bar{x}_1)^2(1+x_1^4+\bar{x}_1^4)+2(y_2-\bar{y}_2)^2\\
&\quad+a_9(x_2-\bar{x}_2)^2(1+x_2^4+\bar{x}_2^4)+2(y_1-\bar{y}_1)^2\\
&\leq 2a_9|x-\bar{x}|^2(1+|x|^4+|\bar{x}|^4+|y|^4+|\bar{y}|^4)\\
&\quad+4|y-\bar{y}|^2(1+|x|^4+|\bar{x}|^4+|y|^4+|\bar{y}|^4)\\
&\leq a_{10}(1+|x|^4+|\bar{x}|^4+|y|^4+|\bar{y}|^4)(|x-\bar{x}|^2+|y-\bar{y}|^2),
\end{split}
\end{equation}
where $a_9=8a_8\vee 4$ and $a_8=\sup(u-u^4)$ and $a_{10}=2a_9\vee 4$
we hence see from (\ref{f-f}) and (\ref{g-g}) that Assumption \ref{assumption4} ia also satisfied.
\par
Let us compute the approximation of the mean square error. We run M=100 independent trajectories for every different step sizes, $10^{-2}$, $10^{-3}$, $10^{-4}$, $10^{-6}$. Because it is hard to find the true solution for the SDDE, the numerical solution with the step size $10^{-6}$ is regard as the exact solution.
\begin{figure}[H]
\centering
\includegraphics[width=0.80\textwidth]{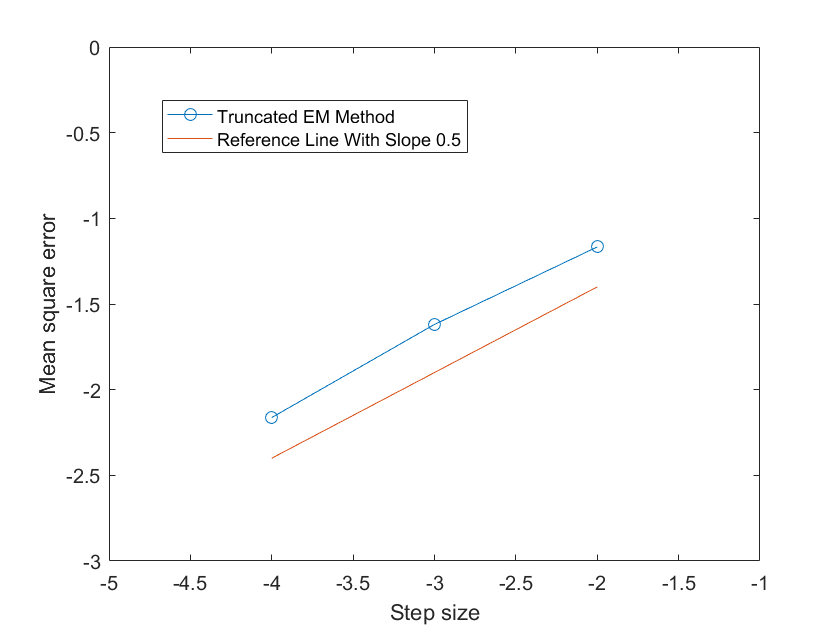}
\caption{Convergence rate of Example}
\end{figure}
Figure 1: The mean square errors between the exact solution and the numerical solutions for step sizes $\Delta=10^{-2},10^{-3},10^{-4}$.
\par
By the linear regression, also shown in the Figure 1, the slope of the errors against the step sizes is approximately 0.498, which is very close to the theoretical result.
\end{example}

\section{Conclusion}\label{sec:conl}
In this paper, we presented a general theorem on the equivalence of the $p$th moment stability between stochastic differential delay equations and their numerical methods. By saying the theorem, we mean that the theorem holds when the numerical methods are strongly convergent and have the bouneded $p$th moment in the finite time while regardless of structures of the numerical methods.
\par
To show that the general theorem is applicable, we study the truncated EM method as an example. Alongside with the investigation of the strong convergence of the truncated EM method, we also significantly release the requirements on the step size compared with the the original work.
\par
Since the $p$ in our setting could be very close to 0, our result also indicates some connections between the almost sure stability of the underlying SDDEs and their numerical methods.

\section*{Appendix} \label{sec:appen}

In \cite{GMY2018}, the truncated EM for SDDE was originally developed. It was required to choose a number $\Delta^*\in(0,1]$ and a strictly decreasing function $h:(0,\Delta^*]\rightarrow[\mu(0),\infty)$ such that
$$h(\Delta^*)\geq\mu(1),~~\lim_{\Delta\rightarrow0}h(\Delta)=\infty~~\text{and}~~\Delta^{1/4}h(\Delta)\leq1,~~\forall\Delta\in(0,\Delta^*].$$
In this paper, we simply let $\Delta^*=1$ and remove the condition that $h(\Delta^*)\geq\mu(1)$. Meanwhile, we also replace requirement that $\Delta^{1/4}h(\Delta)\leq1$ by a weaker one, $\Delta^{1/4}h(\Delta)\leq\hat{h}$. In other words, we have made the choice of function h more flexible. We emphasize that such changes do not make any effect on the results in \cite{GMY2018}. In fact, condition $h(\Delta^*)\geq\mu(1)$ was only used to prove Lemma 4.2 of \cite{GMY2018}. But, in view of Lemma \ref{lemma1} in this paper, we see that the constant $2k_1$ in Lemma 4.2 of \cite{GMY2018} is now replaced by another constant $\hat{k}$ which does not affect any results in \cite{GMY2018}. It is also easy to check that replacing $\Delta^{1/4}h(\Delta)\leq1$ by $\Delta^{1/4}h(\Delta)\leq\hat{h}$ does not make any effect on the other results in \cite{GMY2018}.

\end{document}